\numberwithin{equation}{section}
\newtheorem{theorem}{Theorem}[section]
\newtheorem{lemma}[theorem]{Lemma}
\newtheorem{definition}[theorem]{Definition}
\newtheorem{proposition}[theorem]{Proposition}
\theoremstyle{remark}
\newtheorem{remark}{Remark}
\newcommand{\andf}{\quad\hbox{and}\quad}
 \newcommand{\wt}{\widetilde}
\def\div{ \hbox{\rm div}\,  }
\def\curl{ \hbox{\rm curl}\,  }
\def\R{{\mathbb R}}
\def\Z{{\mathbb Z}}
\def\ddj{\dot\Delta_j}
\def\eps{\varepsilon}
\begin{document}
\title[\hfilneg \hfil ]
{Global Well-posedness for the  Hall-magnetohydrodynamics system  in larger  critical Besov spaces}

 \author[Liu]{ Lvqiao Liu}
 \address[Lvqiao Liu]{\newline School of Mathematics and statistics, Wuhan University, Wuhan 430072, China}
 \email{lvqiaoliu@whu.edu.cn}
 
  \author[Tan]{ \text{Jin Tan}}
 \address[Jin Tan]{\newline Universit\'e Paris-Est Cr\'eteil,  LAMA UMR 8050, 61 avenue du G\'en\'eral de Gaulle,  94010 Cr\'eteil cedex 1}
 \email{jin.tan@u-pec.fr}
 
 \subjclass[2010]  {35Q35; 76D03; 86A10}
 \keywords  {Hall-MHD;  Well-posedness; Critical space; Decay estimates.}

\begin{abstract}
 We prove the global well-posedness of  the Cauchy problem to the 3D incompressible  Hall-magnetohydrodynamic system 
supplemented  with initial data in critical Besov spaces, which generalize the result in \cite{2019arXiv191103246D}.
Meanwhile, we analyze the long-time behavior of the solutions and get some decay estimates.
Finally, a stability theorem for global  solutions is established. \\

\end{abstract}

\maketitle
\section{Introduction }
This paper focuses on  the following three dimensional incompressible resistive and viscous Hall-magnetohydrodynamics system (Hall-MHD) in $\mathbb{R}^3$:
\begin{align}
&\partial_t{\mathnormal u}+u\cdot\nabla u+\nabla \mathnormal \pi= b \cdot \nabla  b+\mu\Delta \mathnormal u\label{1.1},\\
&{\mathrm{div}}\,\mathnormal u={\mathrm{div}}\,\mathnormal   b=0\label{1.2},\\
&\partial_t{\mathnormal b}-\nabla\times((\mathnormal u -{\varepsilon}\nabla\times\mathnormal b) \times \mathnormal b)=\nu\Delta \mathnormal b\label{1.3}, 
\end{align}
 with the initial data:
\begin{equation}
(\mathnormal u(0,\mathnormal x), \mathnormal b(0,\mathnormal x))=(\mathnormal u_{0}(\mathnormal x), \mathnormal b_{0}(\mathnormal x)),\quad
x\in{\mathbb R}^3.\label{1.4}
\end{equation}
Where $\pi=(P+\frac{|b|^2}{2})$,    $\mathnormal u$, $\mathnormal b$ and $\mathnormal P$ stand for  the velocity field, the magnetic field and the scalar pressure, respectively.  The parameters $\mu$ and $\nu$ denote the fluid viscosity and the magnetic resistivity respectively, while the dimensionless number ${\varepsilon}$ measures   the 
magnitude  of the Hall  effect compared to  the typical length scale of the fluid.

 Hall-MHD is much different from the classical MHD equation, due to the appearance of the so-called {{\em{Hall-term}} $:\eps\nabla\times((\nabla\times b)\times b)$.  It does play an important role in magnetic reconnection, as observed in e.g. 
 plasmas, star formation, solar flares, neutron stars or geo-dynamo. For  more explanation on the physical background of Hall-MHD system, one can refer to \cite{BalbusLinear,Braiding2005Star,MR1994506,SomovMagnetic}.
Meanwhile, it looks that the mathematical analysis of the Hall-MHD system is  more complicated than that for the MHD system, since Hall-term makes Hall-MHD  a quasi-linear PDEs.

Considering its physical significance and mathematical applications,  Hall-MHD system has been considered 
by many researchers. The authors in \cite{MR2861579} had derived the Hall-MHD equations from a two-fluid Euler-Maxwell system for electrons and ions by some scaling limit arguments, which also provided a kinetic formulation for the Hall-MHD.
  Then, in \cite{MR3208454}, Chae, Degond and  Liu showed the global existence of weak solutions as well as the local well-posedness of classical solutions with initial data in sobolev spaces $H^{s}$ with $s>5/2$.  Weak solutions have been further investigated 
by  Dumas and Sueur in \cite{MR3227510}.  
  Moreover, Blow-up criteria for smooth solutions and the small data global existence of smooth solutions are obtained in \cite{MR3186849, Wz15}. Later,  in \cite{MR3460222,MR3460237}, Weng studied the long-time behaviour and obtained optimal space-time decay rates of strong solutions.   More recently, \cite{MR3513590, Wz19}  established the well-posedness of strong solutions with improved regularity conditions for initial data in sobolev  or Besov spaces, and  smooth data with arbitrarily large  $L^\infty$ norms
giving rise to  global unique solutions have been exhibited in \cite{2019arXiv190302299L}.  Very recently,  Danchin and the second author in \cite{2019arXiv191103246D,2019arXiv191209194D}  establish well-posedness  in critical spaces based on a new observation of the Hall-MHD system.
\smallbreak
Our first goal  here is to prove  the global well-posedness  of Hall-MHD system  with initial data in \emph{larger critical spaces} compared with \cite{2019arXiv191103246D}.   Let us first  recall the classical MHD system (corresponding to ${\varepsilon}=0$):
$$\left\{
\begin{aligned}
&\partial_t{\mathnormal u}+u\cdot\nabla u+\nabla \mathnormal \pi= b\cdot \nabla  b+\mu\Delta \mathnormal u,\\
&\div  u=\div   b=0,\\
&\partial_t{\mathnormal b}-\nabla\times(\mathnormal u \times\mathnormal b) =\nu\Delta \mathnormal b, 
\end{aligned}
\right.\leqno(MHD)$$
it is  invariant for all  $\lambda>0$ by the rescaling 
\begin{equation}\label{eq:solutionMHD}
(u(t,x), b(t, x))\leadsto  \lambda (u({\lambda^{2}t, \lambda x}), b({\lambda^{2}t, \lambda x}))\andf \pi(t,x)\leadsto  
\lambda^2\pi({\lambda^{2}t, \lambda x}),
\end{equation}
provided the initial  {data}  $(u_0, b_0)$ is rescaled according to 
\begin{equation}\label{eq:dataMHD}
(u_0(x),  b_0(x))\leadsto(\lambda u_0(\lambda x),  \lambda b_0(\lambda x)).
\end{equation}
One can refer \cite{MR2477101} for the well-posedness of $(MHD)$ in critical Besov spaces.

But we see the Hall-term in \eqref{1.3} breaks the above scaling and system \eqref{1.1}-\eqref{1.3} does not have any $scaling~invariance$. It is pointed out in \cite{MR3186849} that if we set the fluid velocity $u$ to $0$ in \eqref{1.3}, 
then we get the following \emph{Hall equation} for $b:$
$$\left\{\begin{aligned}
&\partial_t{\mathnormal b}+{\varepsilon}\nabla\times((\nabla\times\mathnormal b)\times \mathnormal b)
=\nu\Delta \mathnormal b,\\
&b|_{t=0}=b_0,\end{aligned}\right.\leqno(Hall)$$
which is invariant by the rescaling 
\begin{equation}\label{eq:solutionHall}
b(t,x)\leadsto  b({\lambda^{2}t, \lambda x}),
\end{equation}
provided the data $b_0$ is rescaled according to 
\begin{equation}\label{eq:dataHall}
b_0(x)\leadsto b_0(\lambda x).
\end{equation}
In other words,  $\nabla b$  has  the same scaling invariance as the fluid velocity $u$ 
in $(MHD)$.  

On the another hand, Danchin and the second author \cite{2019arXiv191103246D} have transformed the Hall-MHD system into a system having some $scaling ~invariance$, they consider  the current function  $J:=\nabla\times b$ as an additional unknown. 
 Since $b$ is divergence free,  then thanks to the vector identity
 $$\nabla\times (\nabla\times v)+\Delta v=\nabla\div v,$$
we have
$$
 b={\rm{curl}}^{-1}J\,{:=}(-\Delta)^{-1}\nabla\times J,$$
where the $-1$-th order homogeneous Fourier multiplier ${\rm{curl}}^{-1}$ is defined 
 on the Fourier side by 
\begin{equation}\label{eq:curl-1}\mathcal{F}({\rm{curl}}^{-1}J)(\xi):=\frac{i\xi\times\widehat{J}}{|\xi|^2}\cdotp\end{equation}

With that notation,  the system \eqref{1.1}-\eqref{1.3} can be extended to the following \emph{extended Hall-MHD system}:
\begin{equation}\label{main1}
\left\{\begin{aligned}
&\partial_t{\mathnormal u}+u\cdot\nabla u-\mu\Delta \mathnormal u+\nabla \mathnormal \pi=b\cdot \nabla  b,\\
&\partial_t{\mathnormal b}-b\cdot\nabla(u-\eps J)+(\mathnormal u - \eps J)\cdot\nabla b-\nu\Delta \mathnormal b=0,\\
 &\partial_t{\mathnormal J}-\nabla\times\bigl(\nabla\times((\mathnormal u - \eps J) \times {\rm{curl}}^{-1}\mathnormal J)\bigr)-\nu\Delta \mathnormal J=0,\\
 &{\mathrm{div}}\,\mathnormal u={\mathrm{div}}\,\mathnormal b=\div J=0.
\end{aligned}\right.
\end{equation}
with initial data
\begin{equation}\label{maini}
(\mathnormal u(0,\mathnormal x), \mathnormal b(0,\mathnormal x),  \mathnormal J(0,\mathnormal x))=(\mathnormal u_{0}(\mathnormal x), \mathnormal b_{0}(\mathnormal x), \mathnormal \nabla\times b_{0}(\mathnormal x)),\quad
x\in{\mathbb R}^3.
\end{equation}
The reason of considering the  extended system \eqref{main1} rather than the initial system \eqref{1.1}-\eqref{1.3} is
that it has a $scaling ~invariance$, which is actually the same as that of the incompressible MHD equations.
Even better, the quadratic terms in the first two lines of \eqref{main1} are keep the same type with
the incompressible MHD equations.  It is thus natural to study whether the above system goes beyond the theory of the generalized Navier-stokes equations as presented in e.g. \cite{MR2768550}.
 
 {Next, we focus on the large time behavior of the solution in critical Besov spaces
 by using time-weighted estimates.}
 
Our third purpose is to prove the stability of an $a~priori$ global solution to Hall-MHD system. Compared with the classical incompressible Navier-Stokes equations which is semi-linear, the Hall-MHD system is quasi-linear, it may forces us to go beyond
the theory of the generalized Navier-Stokes equations, since the differentiation is outside instead of being inside on the $\curl^{-1} J$ in the last line of \eqref{main1}.
However,  in the case $\mu = \nu,$ it is possible to take advantage of the cancellation property found in \cite{2019arXiv191103246D} combined with  standard energy method to recover  the 
stability results as Navier-Stokes equation (see \cite{MR2032938}). To this, we have to take $p=q=2.$

Throughout this paper, we use $C$ to denote a general positive constant which may
different from line to line. And we sometimes write $A\lesssim B$ instead of $A\leq C B.$  Likewise,    $A\sim B$ means that  $C_1 B\leq A\leq C_2 B$ with absolute constants $C_1$, $C_2$. For $X$ a Banach space, $p\in[1, \infty]$ and $T>0$, the notation $L^p(0, T; X)$ or $L^p_T(X)$ designates the set of measurable functions $f: [0, T]\to X$ with $t\mapsto\|f(t)\|_X$ in $L^p(0, T)$, endowed with the norm $\|\cdot\|_{L^p_{T}(X)} :=\|\|\cdot\|_X\|_{L^p(0, T)},$ and agree that $\mathcal C([0, T], X)$ denotes the set of continuous functions from $[0, T]$ to $X$. Slightly abusively, we will keep the same notations for multi-component functions.


\section{Main results}

After the work of  \cite{MR1145160}, we know that the incompressible Navier-Stokes equations is locally well-posed in all homogeneous 
Besov spaces $\dot B^{\frac3p-1}_{p,r}$ with $1\leq p<\infty$ and $1\leq r\leq \infty,$ for any initial data, and globally well-posed for small initial data (see \cite{MR2477101} for similar results of MHD equations).  
Once the Hall-MHD system \eqref{1.1}-\eqref{1.3}  has been recast into its
 extended version as in \eqref{main1}, 
 compared to a recent work obtained  in \cite{2019arXiv191103246D}, they prove global well-posdedness for initial data $$(u_0, b_0, J_0)\in \dot{B}_{p,1}^{\frac{3}{p}-1}\times\dot{B}_{p,1}^{\frac{3}{p}-1}\times\dot{B}_{p,1}^{\frac{3}{p}-1}\quad{\rm{with}}\quad1\leq p<\infty.$$
 It is interesting to consider well-posedness for initial data   $(u_0, b_0, J_0)$
 in the general critical homogeneous Besov spaces  $$\dot{B}_{p,1}^{\frac{3}{p}-1}\times\dot{B}_{q,1}^{\frac{3}{q}-1}\times\dot{B}_{q,1}^{\frac{3}{q}-1}.$$
 The solution $(u, b)$ thus lies in the {space} $E_p\times E_q$, which defines as follow:
 $$E_{p}(T){:=} \{v\in \mathcal{C}([0, T], \dot B^{\frac{3}{p}-1}_{p, 1}),~\nabla_x^2 v\in  L^1(0, T; \dot B^{\frac{3}{p}-1}_{p, 1})\,~{\rm{and}}~\div~v=0 \}
 $$
 with
 \begin{equation*}
 \|v\|_{E_p(T)}:=\|v\|_{L^\infty_T(\dot B^{\frac{3}{p}-1}_{p, 1})}+\|v\|_{L^1_T(\dot B^{\frac{3}{p}+1}_{p, 1})}.
 \end{equation*}
 or in its global version, denoted by  $E_{p},$ for solutions defined on $ {\mathbb{R}_+}\times \mathbb{R}^3.$ 

\medbreak
Our first result states the global well-posedness of the  Hall-MHD system \eqref{1.1}-\eqref{1.4} for all positive coefficients $\mu, \nu, \eps.$ 

\begin{theorem}\label{th}
	Let $1 \leq p\leq q <\infty$ be such that
	\begin{equation}\label{condition1}
	-\min\{\frac13,~\frac1{2p}\}\leq \frac{1}{q}-\frac{1}{p}.
	\end{equation}
	Assume that $u_0 \in \dot B^{\frac{3}{p}-1}_{p, 1} $ and $  b_0,\,\nabla\times b_0 \in  \dot B^{\frac{3}{q}-1}_{q, 1}.$
There exists a positive constant $\varepsilon_0$ depends on $\mu, \nu, \eps, p, q$ such that if
\begin{equation}\label{small}
	\|u_0\|_{\dot B^{\frac{3}{p}-1}_{p, 1}}+\|b_0\|_{\dot B^{\frac{3}{q}-1}_{q, 1}}+\|\nabla\times b_0\|_{\dot B^{\frac{3}{q}-1}_{q, 1}}\leq \varepsilon_0,
	\end{equation}  
	then the Cauchy problem \eqref{1.1}-\eqref{1.4} admits a unique global-in-time solution 
	\begin{equation}\label{562875}
	(u, b ) \in E_p\times E_q , \quad \text{and} \quad \nabla\times b\in E_{q},
	\end{equation}
	with
	\begin{equation}\label{1.1200}
	\begin{split}
	&\|u\|_{L^\infty(\mathbb{R}_+; \dot{B}_{p, 1}^{\frac{3}{p}-1})}+\mu\|u\|_{L^1(\mathbb{R}_+; \dot{B}_{p, 1}^{\frac{3}{p}+1})}\\
	&\hspace{3cm}+\|(b,  J)\|_{L^\infty(\mathbb{R}_+; \dot{B}_{q, 1}^{\frac{3}{q}-1})}+\nu\|(b,  J)\|_{L^1(\mathbb{R}_+; \dot{B}_{q, 1}^{\frac{3}{q}+1})}\leq2\varepsilon_0 .
		\end{split}
	\end{equation}
	If only $\nabla\times b_0$ fulfills \eqref{small} and in addition
	\begin{equation}\label{condition2}
	-\frac13<\frac1q-\frac1p,
	\end{equation}
	there exists a time $T>0$ such that Hall-MHD system admits a unique local-in-time solution $$(u, b)\in E_p(T)\times E_q(T)\quad{\text{with}}\quad \nabla\times b\in E_q(T).$$
\end{theorem}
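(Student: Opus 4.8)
The plan is to solve the extended system \eqref{main1}, which unlike \eqref{1.1}--\eqref{1.3} enjoys a scaling invariance, by a fixed point argument in the Banach space $E_p\times E_q\times E_q$, keeping in mind the constraint $J=\nabla\times b$ that ties the third unknown to the second. Writing $P$ for the Leray projector and using Duhamel's formula, a solution is a fixed point of the map $\Phi(u,b,J)=(\Phi_1,\Phi_2,\Phi_3)$ whose components are the mild formulations of the three evolution equations, e.g. $\Phi_1(u,b,J)(t)=e^{\mu t\Delta}u_0+\int_0^t e^{\mu(t-\tau)\Delta}P(b\cdot\nabla b-u\cdot\nabla u)\,d\tau$, and analogously for $\Phi_2,\Phi_3$ with the semigroup $e^{\nu t\Delta}$. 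I measure the size of a solution by $X(T):=\|u\|_{E_p(T)}+\|(b,J)\|_{E_q(T)}$.

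The two linear ingredients are standard. First, maximal parabolic regularity in critical Besov spaces: if $\partial_t w-\kappa\Delta w=f$ with $w|_{t=0}=w_0$ and $\div w=0$, then $\|w\|_{L^\infty_T(\dot B^{s}_{r,1})}+\kappa\|w\|_{L^1_T(\dot B^{s+2}_{r,1})}\lesssim\|w_0\|_{\dot B^s_{r,1}}+\|f\|_{L^1_T(\dot B^s_{r,1})}$, applied with $(s,r)=(\tfrac3p-1,p)$ for the velocity and $(s,r)=(\tfrac3q-1,q)$ for $b$ and $J$. Second, Bony's decomposition together with the product laws in homogeneous Besov spaces and the embeddings $\dot B^{3/p+\sigma}_{p,1}\hookrightarrow\dot B^{3/q+\sigma}_{q,1}$ valid for $p\le q$. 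The decisive structural fact is the identity $b={\rm{curl}}^{-1}J$: since ${\rm{curl}}^{-1}$ is a Fourier multiplier of order $-1$, one has $\|b\|_{\dot B^{s+1}_{q,1}}\sim\|J\|_{\dot B^s_{q,1}}$, so within the functional framework $b$ is always one derivative smoother than $J$, in particular $b\in L^\infty_T(\dot B^{3/q}_{q,1})\hookrightarrow L^\infty_T(L^\infty)$. This extra smoothing is what makes the quadratic terms carrying two derivatives tractable.

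The core of the proof is to bound every nonlinear term in $L^1_T$ of the critical space attached to its equation by $C\,X(T)^2$. The incompressible–MHD-type terms $u\cdot\nabla u$, $b\cdot\nabla b$ (velocity equation) and $(u-\eps J)\cdot\nabla b$, $b\cdot\nabla(u-\eps J)$ (magnetic equation) are handled by writing them in divergence form and applying the product laws; here hypothesis \eqref{condition1} enters precisely to control the mixed velocity–magnetic products, the bound $1/(2p)$ being dictated by the Lorentz force $b\cdot\nabla b$ landing in $\dot B^{3/p-1}_{p,1}$ while $b$ lives on the $\dot B_q$ scale, and the bound $1/3$ by the magnetic stretching and Hall interaction terms. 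The genuinely new difficulty, and the main obstacle, is the top-order Hall term in the third equation, $\eps\,\nabla\times\nabla\times(J\times{\rm{curl}}^{-1}J)$: it is quasilinear, with the two derivatives sitting outside the product, so a naive estimate would require $J\times b\in L^1_T(\dot B^{3/q+1}_{q,1})$ with both factors simultaneously at top regularity, which does not close. This is resolved by the cancellation observed in \cite{2019arXiv191103246D} combined with the smoothing $b={\rm{curl}}^{-1}J$: after Bony's decomposition the only dangerous piece is the paraproduct carrying both derivatives on $J$, and one of these derivatives is transferred onto $b$, after which the piece is bounded by $\|J\|_{L^1_T(\dot B^{3/q+1}_{q,1})}\|b\|_{L^\infty_T(\dot B^{3/q}_{q,1})}\lesssim X(T)^2$. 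The remaining pieces of the Hall term, including those involving $u$, are estimated as above using \eqref{condition1}.

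With these estimates the global statement follows from a standard contraction argument: under the smallness assumption \eqref{small}, $\Phi$ maps a small ball of $E_p\times E_q\times E_q$ into itself and is a contraction there, producing the unique global solution together with the bound \eqref{1.1200}, while continuity in time and the membership $\nabla\times b\in E_q$ come for free from the maximal regularity estimate and the relation $b={\rm{curl}}^{-1}J$. For the local statement, where only $J_0$ (hence, by the smoothing, $b_0$ in $\dot B^{3/q}_{q,1}$) is small while $u_0,b_0$ may be large, I would split the data into low and high frequencies: the high-frequency part is small in the critical norm, and the low-frequency part is treated using the strict inequality \eqref{condition2}, which opens a subcritical gap yielding a positive power of $T$ in the corresponding estimates, so that $\Phi$ becomes a contraction on $[0,T]$ for $T$ small. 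The smallness of $J_0$ keeps the quasilinear coefficient of the Hall term (of size $\sim\eps\|b\|\sim\eps\|J_0\|$) small on the interval, which is exactly what allows the top-order term to be absorbed without smallness of $u_0$ and $b_0$. Uniqueness in both cases follows by running the same product estimates on the difference of two solutions. The step I expect to be most delicate is the top-order Hall term, namely making the cancellation of \cite{2019arXiv191103246D} interact correctly with the two distinct Lebesgue exponents $p\ne q$.
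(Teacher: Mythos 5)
Your proposal follows essentially the same route as the paper: recast the problem as the extended system \eqref{main1}, apply Duhamel's formula and maximal regularity for the heat flow in the critical spaces $\dot B^{\frac3p-1}_{p,1}$ and $\dot B^{\frac3q-1}_{q,1}$, and close the quadratic estimates with product laws in which the order $-1$ multiplier ${\rm{curl}}^{-1}$ absorbs one of the two derivatives in the Hall term (Lemma \ref{law2} and estimate \eqref{law22}); the paper runs a Picard iteration on the perturbation of the free solution rather than a contraction of $\Phi$, and gets the local statement from the vanishing of $\|e^{t\mu\Delta}u_0\|_{L^\rho_T(\dot B^{s+2/\rho}_{p,1})}$ as $T\to0$ (Lemma \ref{563865}, whose finiteness of $\rho$ is exactly where \eqref{condition2} enters) rather than an explicit frequency splitting, but these are cosmetic differences. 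Two small caveats: the ``cancellation'' of \cite{2019arXiv191103246D} is not what closes the Hall estimate here --- only the product law is needed, the cancellation being reserved for the $p=q=2$ energy argument of Theorem \ref{th2} --- and the identity $J=\nabla\times b$ is not automatic for the extended system but must be propagated from $t=0$ via a separate Gronwall argument on $\nabla\times\bar b-\bar J$, a step your write-up omits.
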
	
 
  Next,  we prove that  the solution  has the following decay estimates.
 \begin{theorem}\label{th1}
 Let $1\leq p\leq q<\infty$ satisfy \eqref{condition1} and \eqref{condition2}. 
Let $(u, b)$ be a solution of the Cauchy problem \eqref{1.1}-\eqref{1.4}  supplemented with initial data $(u_0, b_0)$ that satisfies \eqref{small}. Then for any integer $m\geq 1$, we have 
 	\begin{equation*}
 	\|D^m u\|_{\dot{B}_{p, 1}^{\frac{3}{p}-1}}+\|D^m b\|_{\dot{B}_{q, 1}^{\frac{3}{q}-1}} \leq C_0 \eps_0  t^{ -\frac m2},
 	\end{equation*}
 	for all $t>0,$  where 
 	$$\|D^m u\|_{\dot{B}_{p, 1}^{\frac{3}{p}-1}}:=\sup_{|\alpha|=m}\|D^\alpha u\|_{\dot{B}_{p, 1}^{\frac{3}{p}-1}},$$ and the positive constant $C_0$ depends on $\mu, \nu,\eps, p, q, m.$
 \end{theorem}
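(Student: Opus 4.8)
The plan is to prove the estimate by induction on the derivative order $m$, propagating time-weighted bounds along the global solution furnished by Theorem~\ref{th}. I work throughout with the extended system \eqref{main1}, for which $(u,b,J)$ already satisfies the uniform bound \eqref{1.1200}; in particular $\int_0^\infty\bigl(\|u\|_{\dot B^{3/p+1}_{p,1}}+\|(b,J)\|_{\dot B^{3/q+1}_{q,1}}\bigr)\,d\tau\lesssim\eps_0$, which is the integrated parabolic smoothing that drives the decay. I would introduce, for each integer $k\ge0$, the time-weighted quantity
\begin{equation*}
\mathcal D_k(t):=\sup_{0<\tau\le t}\tau^{k/2}\Bigl(\|D^k u(\tau)\|_{\dot B^{3/p-1}_{p,1}}+\|D^k b(\tau)\|_{\dot B^{3/q-1}_{q,1}}+\|D^k J(\tau)\|_{\dot B^{3/q-1}_{q,1}}\Bigr),
\end{equation*}
so that $\mathcal D_0(t)\lesssim\eps_0$ is exactly \eqref{1.1200}, and the target is $\mathcal D_m(t)\lesssim\eps_0$ for every $m\ge1$. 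The induction hypothesis is that $\mathcal D_k(t)\le C_0\eps_0$ for all $0\le k\le m-1$ and all $t>0$.

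For the inductive step I would write each equation of \eqref{main1} in Duhamel (mild) form, e.g.
\begin{equation*}
u(t)=e^{\mu t\Delta}u_0+\int_0^t e^{\mu(t-\tau)\Delta}\,\mathbb P\bigl(b\cdot\nabla b-u\cdot\nabla u\bigr)(\tau)\,d\tau,
\end{equation*}
and analogously for $b$ and $J$, then apply a derivative $D^\alpha$ with $|\alpha|=m$. The two working tools are the parabolic smoothing estimate $\|D^a e^{t\Delta}f\|_{\dot B^s_{r,1}}\lesssim t^{-a/2}\|f\|_{\dot B^s_{r,1}}$ in any homogeneous Besov space, and the product laws (via Bony's decomposition) that let me estimate the quadratic terms across the two exponents $p$ and $q$; it is precisely here that the relation \eqref{condition1} between $1/p$ and $1/q$ is used, exactly as in the proof of Theorem~\ref{th}. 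The homogeneous part is immediate: $\|D^m e^{\mu t\Delta}u_0\|_{\dot B^{3/p-1}_{p,1}}\lesssim t^{-m/2}\|u_0\|_{\dot B^{3/p-1}_{p,1}}\lesssim\eps_0\,t^{-m/2}$, and likewise for $b_0,J_0$.

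For the Duhamel term I would distribute the $m$ derivatives by Leibniz, $D^m(fg)=\sum_{k=0}^m\binom{m}{k}D^kf\,D^{m-k}g$. Every split with both orders $\le m-1$ is controlled by the induction hypothesis, which supplies the factor $\tau^{-k/2}\tau^{-(m-k)/2}=\tau^{-m/2}$ together with an $L^1$-integrable top-regularity norm from \eqref{1.1200}; the resulting time integral is of convolution-of-powers type and, by the standard Beta-function estimate, produces exactly $t^{-m/2}$ provided the exponents remain in the integrable range, which is where integrality of $m$ and the strict inequality \eqref{condition2} keep the endpoints off the borderline. The two endpoint splittings, in which all $m$ derivatives land on a single factor, reproduce $\mathcal D_m$ itself; these I would absorb into the left-hand side using the smallness \eqref{small}, exactly as the contraction in Theorem~\ref{th} is closed. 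Summing over $|\alpha|=m$ then yields $\mathcal D_m(t)\le C_0\eps_0$, completing the induction.

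The main obstacle is the quasilinear Hall nonlinearity in the third line of \eqref{main1}, namely $\nabla\times\bigl(\nabla\times((u-\eps J)\times\mathrm{curl}^{-1}J)\bigr)$, in which two derivatives sit outside the product. When $D^m$ is distributed together with these two curls, the worst terms carry up to $m+2$ derivatives on a single factor; even after $\mathrm{curl}^{-1}$ restores one order, these terms live at the top-regularity level $\dot B^{3/q+1}_{q,1}$ of $J$ rather than at the base level, so they cannot be closed by the induction hypothesis on lower orders. The resolution is to absorb them through the parabolic smoothing of the $J$-equation together with the $L^1_\tau$ top-norm bound from \eqref{1.1200} and the smallness \eqref{small}, rather than through pure decay; carrying this out while keeping the mixed $p$/$q$ product estimates balanced under \eqref{condition1} is the delicate bookkeeping at the heart of the argument.
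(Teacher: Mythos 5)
Your overall strategy (time-weighted quantities, Duhamel formula, heat-kernel smoothing) points in the right direction, but there are two concrete gaps. First, the ``convolution-of-powers/Beta-function'' step fails: for $m\ge 2$ the weight $\tau^{-m/2}$ produced by your induction hypothesis is not integrable near $\tau=0$, so the Duhamel integral cannot be closed that way. The paper instead splits the integral at $t/2$: on $[0,t/2]$ it puts \emph{all} $m$ derivatives on the heat kernel via $e^{-c\mu2^{2j}(t-s)}2^{jm}\lesssim e^{-\tilde c\mu2^{2j}t}\,t^{-m/2}$ and bounds the nonlinearity by the undifferentiated global norms from \eqref{1.1200}; on $[t/2,t]$ the weight $s^{-m/2}$ is comparable to $t^{-m/2}$, only one derivative is moved to the kernel, and the remaining $m-1$ derivatives are redistributed by interpolation between the order-$m$ and order-$0$ norms. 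In particular the paper uses no induction on $m$ and needs no bounds at intermediate orders --- which is fortunate, because \eqref{1.1200} does not supply the $L^1_\tau$ top-regularity norms of $D^{m-k}g$ that your intermediate Leibniz splits would require once $m-k\ge 3$, and your induction hypothesis does not propagate them either.

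Second, and more seriously, you correctly single out the quasilinear Hall term as the heart of the matter but leave its treatment at the level of ``delicate bookkeeping''. The paper never estimates $D^mJ$ at all: it works with the original magnetic equation \eqref{1.3} rather than the $J$-equation of \eqref{main1}, and uses $\div b=0$ to rewrite $\nabla\times(\eps J\times b)=\eps\nabla\times\bigl(\div(b\otimes b)\bigr)$, so the Hall contribution becomes quadratic in $b$ alone with three outer derivatives; the two extra derivatives are then absorbed on $[t/2,t]$ by $\int_{t/2}^te^{-c\nu2^{2j}(t-s)}2^{2j}\,ds\le 2/(c\nu)$. Without this identity (or an equivalent device), your plan of propagating decay of $D^kJ$ through the extended system faces a genuine loss of derivatives: the worst Leibniz term carries $m+1$ derivatives of $J$ at the base regularity, which is controlled neither by your quantities $\mathcal D_k$ with $k\le m$ nor by \eqref{1.1200}. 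As written, the proposal therefore does not close.
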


 The following Theorem states the global stability for possible large solutions of Hall-MHD system in critical spaces when $\mu=\nu.$
 \begin{theorem}\label{th2}
     Assume that  $(u_{0,i}, b_{0,i})\in{\dot B^{\frac{1}{2}}_{2, 1}}(\R^3)$ with $\div u_{0,i}=\div b_{0,i}=0$
such that  $v_{0,i}\in{\dot B^{\frac{1}{2}}_{2, 1}}(\R^3),$ 
where $$v_{0,i}:=u_{0,i}-\eps \nabla\times b_{0,i}, ~i=1, 2.$$
Suppose in addition that for $\mu=\nu$  the Cauchy problem \eqref{1.1}-\eqref{1.4}
supplemented with initial data $(u_{0,1},b_{0,1})$ admits a global solution  $(u_1,b_1)$ 
 such that
 $$(u_1, b_1,\nabla\times b_1)\in L^1(\R_+; \dot{B}^\frac52_{2, 1}(\R^3)).$$ 
     There exist two positive constants   $\eta, C$  {depend}  on $\mu,$ $\eps$  such that if
     \begin{equation}\label{ind}
         \|(u_{0,1}-u_{0,2},b_{0,1}-b_{0,2}, v_{0,1}-v_{0,2} )\|_{\dot B^{\frac{1}{2}}_{2, 1}}\leq\eta,
     \end{equation}
     then $(u_{0,2}, b_{0,2})$ generate a global solution $(u_2, b_2)\in E_2$, and
     \begin{equation}\label{solud}
         \|(u_{1}-u_{2},b_{1}-b_{2}, \nabla\times b_{1}-\nabla\times b_{2})\|_{E_2}\leq \eta\exp\bigl(C\|(u_1, b_1, \nabla\times b_1)\|_{E_2}\bigr).
     \end{equation}
 \end{theorem}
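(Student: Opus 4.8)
The plan is to prove Theorem~\ref{th2} as a perturbative stability statement around the given global solution $(u_1,b_1,J_1)$, where $J_i:=\nabla\times b_i$, working entirely in the Hilbertian critical space $\dot B^{\frac12}_{2,1}$ (this is exactly why we are forced to take $p=q=2$). First I would reformulate the system in terms of the ``good unknown'' $v:=u-\eps J$, which is divergence free and, precisely when $\mu=\nu$, obeys a genuine parabolic equation $\partial_t v-\mu\Delta v+\nabla\pi=-u\cdot\nabla u+b\cdot\nabla b-\eps\,\nabla\times\nabla\times(v\times b)$, in which the viscous and resistive dissipations recombine as $\mu\Delta u-\eps\nu\Delta J=\mu\Delta v$. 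The smallness hypothesis \eqref{ind} is exactly the smallness of the initial differences $\delta u_0,\delta b_0,\delta v_0$ in $\dot B^{\frac12}_{2,1}$, and since $\delta J=(\delta u-\delta v)/\eps$, controlling $\delta u,\delta b,\delta v$ is equivalent to controlling $\delta u,\delta b,\delta J$, that is, the quantity appearing in \eqref{solud}.

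Next I would subtract the two systems and write the equations for the differences $\delta v:=v_2-v_1$, $\delta b:=b_2-b_1$, $\delta J:=J_2-J_1$. Each of these is a forced Stokes/heat equation whose right-hand side is bilinear, and I would split every bilinear term into a part that is linear in the difference with a coefficient built from the background $(u_1,b_1,J_1)$, and a part that is genuinely quadratic in the difference. The linear-in-$\delta$ terms carry the coefficients $u_1,b_1,J_1$, whose time integrals are finite in $\dot B^{\frac52}_{2,1}$ by hypothesis; after a Gr\"onwall argument these will produce the exponential factor $\exp(C\|(u_1,b_1,\nabla\times b_1)\|_{E_2})$ in \eqref{solud}.

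The heart of the argument is a localized energy estimate: I would apply $\ddj$ to each difference equation, take the $L^2$ inner product with $\ddj\delta v$, $\ddj\delta b$ and $\ddj\delta J$ respectively, and then sum the three identities after multiplying by $2^{j/2}$ to reconstruct the $\dot B^{\frac12}_{2,1}$ norm. The difficulty specific to Hall-MHD is that the Hall term $\eps\,\nabla\times\nabla\times(v\times b)$ carries two derivatives, so a naive estimate loses regularity; the way around it is the cancellation property of \cite{2019arXiv191103246D}, valid precisely because $\mu=\nu$, whereby the top-order Hall contribution in the $\delta v$ equation is antisymmetric with a matching contribution in the coupled $\delta b$ (equivalently $\delta J$) equation and drops out of the summed energy identity. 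What survives must be handled by the standard product, commutator and paraproduct estimates in Besov spaces, together with the commutator bound for $\ddj$ against the transport fields. This is the step I expect to be the main obstacle, since every remaining term must be bounded either by a coefficient lying in $L^1_t(\dot B^{\frac52}_{2,1})$ (hence Gr\"onwall-admissible) or by a factor carrying the smallness of the perturbation (hence absorbable into the left-hand side).

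Finally, I would close the scheme in two stages. A priori, the summed inequality reads schematically $\frac{d}{dt}\|\delta\|_{\dot B^{1/2}_{2,1}}+c\,\|\delta\|_{\dot B^{5/2}_{2,1}}\lesssim \big(\|(u_1,b_1,J_1)\|_{\dot B^{5/2}_{2,1}}+\|\delta\|_{\dot B^{5/2}_{2,1}}\big)\|\delta\|_{\dot B^{1/2}_{2,1}}$, so that for $\|\delta\|$ small the quadratic term is absorbed by the dissipation and Gr\"onwall yields \eqref{solud}. To turn this into an existence statement for $(u_2,b_2)$, I would invoke the local well-posedness part of Theorem~\ref{th} (legitimate since $p=q=2$ meets \eqref{condition1}--\eqref{condition2}) to produce a local solution issued from $(u_{0,2},b_{0,2})$, and then use the uniform-in-time a priori bound, through a standard continuation/bootstrap argument, to propagate it to all of $\R_+$ and place it in $E_2$. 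Choosing $\eta$ small enough, depending only on $\mu$ and $\eps$ through the absorption threshold and the constant $C$, then completes the proof.
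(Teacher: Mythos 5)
Your overall strategy --- pass to the good unknown $v=u-\eps\nabla\times b$, exploit a cancellation of the top-order Hall term (available because $\mu=\nu$) in a Littlewood--Paley energy estimate in $\dot B^{1/2}_{2,1}$, split the bilinear terms into linear-in-difference parts with coefficients controlled by $(u_1,b_1,\nabla\times b_1)\in L^1(\R_+;\dot B^{5/2}_{2,1})$ and quadratic parts absorbed by the dissipation, then close by Gronwall/bootstrap and a continuation argument --- is exactly the paper's. But the central step, as you describe it, would not go through. The cancellation is \emph{not} an antisymmetric pairing between the $\delta v$ equation and the $\delta b$ (or $\delta J$) equation; it is a self-cancellation inside the $v$-equation energy identity, namely $\bigl(\nabla\times((\nabla\times v)\times b),v\bigr)_{L^2}=\bigl((\nabla\times v)\times b,\nabla\times v\bigr)_{L^2}=0$. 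To see it one must first rewrite the Hall term in the form $-\eps\nabla\times((\nabla\times v)\times b)$ rather than your $-\eps\nabla\times\nabla\times(v\times b)$; that rewriting is precisely what generates the extra lower-order terms $\nabla\times(v\times u)+2\eps\nabla\times(v\cdot\nabla b)$ in \eqref{ve}, which your version of the equation omits. Consequently the triple to estimate is $(\delta u,\delta b,\delta v)$, with $\delta J=(\delta u-\delta v)/\eps$ recovered algebraically at the end: testing the $\delta J$ equation against $\ddj\delta J$, as you propose, offers no cancellation and loses a derivative --- that equation is exactly the quasilinear one the unknown $v$ was introduced to avoid. Even in the $\delta v$ equation the cancellation only kills $\ddj\bigl(((b_1+\wt b)\times(\nabla\times\delta v)\bigr)$ up to the commutator $[\ddj,(b_1+\wt b)\times](\nabla\times\delta v)$, which must be bounded via the commutator estimate of \cite{MR3186849} as in \eqref{5.555}; and the surviving piece $\nabla\times((\nabla\times v_1)\times\wt b)$, linear in $\wt b$ but carrying two derivatives of $v_1$, is precisely why the hypothesis $v_1\in L^1(\R_+;\dot B^{5/2}_{2,1})$ enters the weight $\wt\Omega$.

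Two further points. The local solution issued from $(u_{0,2},b_{0,2})$ cannot be obtained from the local part of Theorem \ref{th}, which requires $\|\nabla\times b_0\|$ small; here $\nabla\times b_{0,2}$ is large in general (only its difference with $\nabla\times b_{0,1}$ is small), and the paper instead invokes the large-data local well-posedness of \cite{2019arXiv191103246D}. Also, plain Gronwall does not absorb the quadratic term $\|\delta\|_{\dot B^{5/2}_{2,1}}\|\delta\|_{\dot B^{1/2}_{2,1}}$ uniformly in time: one needs the bootstrap of Lemma \ref{Le_5.000}, whose smallness condition \eqref{5.200} fixes $\eta$ in terms of $\mu$ and $\exp\bigl(\|\wt\Omega\|_{L^1(\R_+)}\bigr)$; your ``absorption threshold'' remark points in this direction, but this quantified version is what actually lets you conclude $\wt T=T^*$ and then $T^*=\infty$.
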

 
 \begin{remark}
 	 In this Theorem, we prove that the flow associated to the Hall-MHD system is Lipschitz in critical regularity setting: perturbing a global solution gives again a global solution, which moreover stays close to the given one. It improve the result in \cite{MR3513590}.
 \end{remark}
 \begin{remark}
 As proposed by Chae and Lee in \cite{MR3186849}, considering the $2\frac12$D flows for the Hall-MHD system, which reads:
\begin{align*}
&\partial_t{\mathnormal u}+\wt u\cdot\wt\nabla u+\wt\nabla\pi=\wt b\cdot\wt\nabla b+\mu\wt\Delta u&\hbox{in }\ \R_+\times\R^2,\\
&\widetilde{\div}\wt u=\wt\div b=0&\hbox{in }\ \R_+\times\R^2,\\
&\partial_t{\mathnormal b}-\wt\nabla\times((u-\eps j)\times b)=\nu\wt\Delta b&\hbox{in }\ \R_+\times\R^2, 
\\
&(\mathnormal u, \mathnormal B)|_{t=0}=(\mathnormal u_{0},\mathnormal B_{0})
&\hbox{in}\ \R^2,
\end{align*}
where the unknowns $u$ and $b$ are functions from $\R_+\times\R^2$ to $\R^3,$
 $\wt u:=(u^1, u^2),$ $\wt{b}:=(b^1, b^2),$ $\wt\nabla:=(\partial_1, \partial_2),$ 
$\wt\div :=\wt\nabla\cdot,$
$\wt\Delta:=\partial_1^2+\partial_2^2$ and 
 $$   j:=\wt\nabla\times b= \left(  \begin{array}{c}
          \partial_2 b^{3} \\
         -\partial_1b^{3}\\
         \partial_1 b^{2}-\partial_2 b^{1}
          \end{array}
\right)\cdotp$$
After a small diversification,  our method may still works for this case. And we shall see that for any initial data $(u_0, 0)\in\dot{B}^0_{2, 1}(\R^2),$ it will generate a global solution $(u, 0)$ for the above system due to the theory of  $2\frac12$D Navier-Stokes equation in \cite{BM}, thus one can conclude that if  $$\|(b_0, \nabla\times b_0)\|_{\dot{B}^0_{2, 1}}<\eta$$ 
then supplemented with data $(u_0, b_0)$ will also generate a global solution.
 \end{remark}
\section{Preliminaries}

In this section, we first recall the Littlewood-Paley decomposition theory, the definition of homogeneous Besov space and some useful properties.   More details and proofs may be found in  e.g.  \cite{MR2768550}. 

Let  $ {\varphi}\in\mathscr{D}(\mathscr{C})$ be a smooth function supported in the annulus   $\mathcal{C}=\{k\in\mathbb{R}^3:\frac34\leq |\xi|\leq \frac83\}$ and such that 
\begin{gather*}
\sum_{j\in\mathbb{Z}}  {\varphi}(2^{-j}k)=1,\quad \forall \xi\in\mathbb{R}^3    \backslash \{0\}.
\end{gather*}
For $u\in \mathcal{S}'(\mathbb{R}^3)$, the frequency localization operator  $\dot{\Delta}_j$ and $\dot{S}_j$ are defined by 
\begin{equation*}
 \forall j \in \mathbb{Z},\quad \dot{\Delta}_ju:={\varphi}(2^{-j} D)u ~\text{and}~\dot{S}_j u:=\sum_{\ell \leq j-1 }\dot{\Delta}_{\ell} u.
\end{equation*}
Then we have the formal decomposition
\begin{equation*}
u=\sum_{j\in\mathbb{Z}}\dot{\triangle}_ju, \quad \forall\, u \in{\mathcal S}'_h({\mathbb R}^3):= \mathcal{S}'(\mathbb{R}^3)/{\mathscr{P}}[\mathbb{R}^3].
\end{equation*}
where $\mathscr{P}[\mathbb{R}^3]$ is the set of polynomials (see \cite{MR0461123}). Moreover, the Littlewood-Paley decomposition satisfies the property of almost orthogonality:
\begin{equation*}
{\dot\Delta}_j{\dot\Delta}_k u=0, \quad {\rm{if}} \ |j-k|\geq 2, \quad \dot\Delta_j(S_{k-1}u \dot\Delta_k u)=0, \quad {\rm{if}} \ |j-k|\geq 5.
\end{equation*}
We now recall the definition of homogeneous Besov spaces from \cite{MR2768550}.
\begin{definition}
	Let $s$ be a real number and $(p, r)$ be in $[1,\infty]^{2}$, we set
	\begin{equation*}
	\|u\|_{\dot B^{s}_{p, r}} :=  \left\{
	\begin{split}
	&\|2^{js}\|\dot\Delta_{j}u\|_{L^{p}(\mathbb{R}^d)}\|_{\ell^r(\mathbb{Z})} \quad\, {\rm{for}} ~ 1 \leq r < \infty,\\
	& \sup_{j \in \mathbb{Z}} 2^{js}\|\dot\Delta_{j}u\|_{L^{p}}~\,\,\quad\qquad {\rm{for}} ~  r= \infty.
	\end{split}
	\right.
	\end{equation*}
	The  homogeneous Besov space $\dot B^{s}_{p, r}  :=\{ u \in \mathcal{S}^{'}_{h}(\mathbb{R}^3),\,	\|u\|_{\dot B^{s}_{p, r}}<\infty \}$.
	
\end{definition}
Next, we recall some basic facts on Littlewood-Paley theory and Besov spaces, one
may check \cite{MR2768550,MR631751} for more details.
\begin{proposition}\label{563856} 
	Fix some $0<r<R.$ 
	A constant $C$ exists such that for any nonnegative integer $k$, any couple $(p, q)$ in $[1, \infty]^2$ with $q\geq p\geq 1$ and any function $u$ of $L^p$ with 
	${\rm{Supp}}~\widehat u\subset \{\xi\in{\mathbb R}^d,\; |\xi|\leq \lambda R\},$ we have
	\begin{equation*}
	 \|D^{k}u\|_{L^{q}}\leq C^{k+1}\lambda^{k+d(\frac{1}{p}-\frac{1}{q})}\|u\|_{L^{p}}.
	\end{equation*}
	If $u$ satisfies   ${\rm{Supp}}~\widehat u\subset \{\xi\in{\mathbb R}^d,\; r\lambda\leq |\xi|\leq R\lambda\},$  then we  have
	\begin{equation*}
	C^{-k-1}\lambda^{k}\|u\|_{L^{p}}\leq   \|D^{k}u\|_{L^{p}} \leq C^{k+1}\lambda^{k}\|u\|_{L^{p}}.
	\end{equation*}
\end{proposition}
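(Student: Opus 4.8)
The plan is to prove both inequalities by the classical rescaling-plus-convolution argument, reducing everything to the case $\lambda=1$ and then to norm estimates for the kernels of a few fixed Fourier multipliers. Throughout, Young's convolution inequality together with the dilation behaviour of $L^s$ kernel norms will supply the exact powers of $\lambda$.

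First, for the ball-support inequality, I would fix a function $\theta\in\mathscr{D}(\mathbb{R}^d)$ equal to $1$ on $\{|\xi|\le R\}$, so that any $u$ with $\mathrm{Supp}\,\widehat u\subset\{|\xi|\le\lambda R\}$ satisfies $\widehat u=\theta(\cdot/\lambda)\widehat u$. For a multi-index $\alpha$ with $|\alpha|=k$, writing $(i\xi)^\alpha=\lambda^k(i\xi/\lambda)^\alpha$ gives the representation
\[
D^\alpha u=\lambda^{k}\,h_{\alpha,\lambda}*u,\qquad h_{\alpha,\lambda}(x):=\lambda^{d}(\mathcal F^{-1}g_\alpha)(\lambda x),\qquad g_\alpha(\eta):=(i\eta)^\alpha\theta(\eta).
\]
Young's inequality with $1+\tfrac1q=\tfrac1s+\tfrac1p$ (so that $s\ge1$ exactly when $q\ge p$) yields $\|D^\alpha u\|_{L^q}\le\lambda^{k}\|h_{\alpha,\lambda}\|_{L^s}\|u\|_{L^p}$, and the scaling identity $\|h_{\alpha,\lambda}\|_{L^s}=\lambda^{d(1-1/s)}\|\mathcal F^{-1}g_\alpha\|_{L^s}=\lambda^{d(1/p-1/q)}\|\mathcal F^{-1}g_\alpha\|_{L^s}$ produces precisely the claimed power $\lambda^{k+d(1/p-1/q)}$. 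Choosing $p=q$ (hence $s=1$) gives the upper bound in the second inequality as the special case of ball support.

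For the lower bound in the second inequality I would reconstruct $u$ from its derivatives of order $k$. Fix $\chi\in\mathscr{D}(\mathbb{R}^d\setminus\{0\})$ equal to $1$ on $\{r\le|\eta|\le R\}$ and supported in an annulus bounded away from the origin, and set $g_\alpha(\eta):=\frac{k!}{\alpha!}\,\frac{(-i\eta)^\alpha}{|\eta|^{2k}}\,\chi(\eta)$. The multinomial identity $\sum_{|\alpha|=k}\frac{k!}{\alpha!}\eta^{2\alpha}=|\eta|^{2k}$ gives $\sum_{|\alpha|=k}g_\alpha(\eta)(i\eta)^\alpha=\chi(\eta)$, which equals $1$ on the annulus; hence, for $u$ with spectrum in $\{r\lambda\le|\xi|\le R\lambda\}$,
\[
u=\sum_{|\alpha|=k}\lambda^{-k}\,\mathcal F^{-1}\!\bigl(g_\alpha(\cdot/\lambda)\bigr)*D^\alpha u.
\]
Young's inequality and the scale-invariance of the $L^1$ norm of the kernel give $\|u\|_{L^p}\le\lambda^{-k}\bigl(\sum_{|\alpha|=k}\|\mathcal F^{-1}g_\alpha\|_{L^1}\bigr)\sup_{|\alpha|=k}\|D^\alpha u\|_{L^p}$, which is the desired reverse estimate once the bracketed sum is bounded by $C^{k+1}$.

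The only delicate point — the main obstacle — is to obtain the uniform geometric constants $\|\mathcal F^{-1}g_\alpha\|_{L^s}\le C^{k+1}$ entering both steps. For this I would estimate the kernels pointwise through the identity $(1+|x|^2)^N(\mathcal F^{-1}g_\alpha)(x)=\mathcal F^{-1}\bigl[(1-\Delta_\eta)^N g_\alpha\bigr](x)$, bounding the right-hand side in $L^\infty$ by the $L^1$ norm of $(1-\Delta_\eta)^N g_\alpha$ over its fixed compact support; taking $2N>d$ makes $(1+|x|^2)^{-N}$ integrable and thereby controls $\|\mathcal F^{-1}g_\alpha\|_{L^s}$ for every $s\in[1,\infty]$. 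Each differentiation of the polynomial factor $(i\eta)^\alpha$, respectively of $(-i\eta)^\alpha/|\eta|^{2k}$ on the annulus, costs at most a factor growing geometrically in $k$, and since the number of multi-indices with $|\alpha|=k$ is at most $C^{k}$, summing over $\alpha$ keeps the total bounded by $C^{k+1}$. This combinatorial bookkeeping, though routine, is exactly where the uniformity in $k$ must be secured.
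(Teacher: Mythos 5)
Your proof is correct: the paper states this proposition without proof, merely recalling it from the cited reference \cite{MR2768550}, and your argument (Fourier-side cutoff, Young's inequality with the dilation scaling of the kernel norm for the ball case; reconstruction of $u$ from its $k$-th derivatives via the multinomial identity $\sum_{|\alpha|=k}\frac{k!}{\alpha!}\eta^{2\alpha}=|\eta|^{2k}$ for the reverse annulus estimate) is precisely the classical proof given there. The kernel bounds $\|\mathcal F^{-1}g_\alpha\|_{L^s}\le C^{k+1}$ via $(1+|x|^2)^N$-weighted integration by parts, with the geometric count of multi-indices, complete the uniformity in $k$ exactly as in the standard treatment, so there is nothing to add.
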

	\begin{lemma}\label{semi}
	Let $\mathcal{C}$ be a ring of $\mathbb{R}^3$, if the support of $\hat{u}$ is included  in $\lambda\mathcal{C}$. Then, there exist two positive constants  $c$ and $ C$ such that for all $1\leq p \leq \infty$,
	\begin{equation*}
	\|e^{t\Delta}u\|_{L^p} \leq C e^{c\lambda^2 t}\| u\|_{L^p},
	\end{equation*} 
	where $e^{t\Delta}$ denotes the heat semi-group operator.
\end{lemma}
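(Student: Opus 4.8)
The plan is to reduce the statement to an $L^1$ bound on the convolution kernel of the spectrally localized heat operator, after which Young's inequality closes the estimate uniformly in $p$. Since $\mathrm{Supp}\,\widehat u\subset\lambda\mathcal{C}$, I would pick a function $\phi\in\mathscr{D}(\R^3)$ that equals $1$ on the ring $\mathcal{C}$ and is supported in a slightly larger ring $\mathcal{C}'$ bounded away from the origin. Then $\widehat u=\phi(\lambda^{-1}\cdot)\widehat u$, so that
\begin{equation*}
e^{t\Delta}u=\mathcal{F}^{-1}\bigl(e^{-t|\xi|^2}\phi(\lambda^{-1}\xi)\bigr)*u=:g_{t,\lambda}*u,
\end{equation*}
and Young's inequality yields $\|e^{t\Delta}u\|_{L^p}\le\|g_{t,\lambda}\|_{L^1}\|u\|_{L^p}$ for every $p\in[1,\infty]$. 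Everything therefore rests on estimating $\|g_{t,\lambda}\|_{L^1}$.

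Next I would remove the dependence on $\lambda$ by scaling. Setting $\xi=\lambda\eta$ in the inverse Fourier transform gives $g_{t,\lambda}(x)=\lambda^3 h(t\lambda^2,\lambda x)$, where
\begin{equation*}
h(s,y):=\mathcal{F}^{-1}_\eta\bigl(\phi(\eta)e^{-s|\eta|^2}\bigr)(y)=\int_{\R^3}\phi(\eta)\,e^{-s|\eta|^2}\,e^{iy\cdot\eta}\,d\eta.
\end{equation*}
The change of variables $y=\lambda x$ then shows $\|g_{t,\lambda}\|_{L^1}=\|h(t\lambda^2,\cdot)\|_{L^1}$, the powers of $\lambda$ cancelling against the Jacobian. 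It thus suffices to prove a bound of the form $\|h(s,\cdot)\|_{L^1}\le Ce^{-cs}$ uniformly in $s\ge0$.

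For the kernel bound I would use the standard weighted $L^\infty$ argument. Since $(1+|y|^2)^{-2}$ is integrable on $\R^3$, one has $\|h(s,\cdot)\|_{L^1}\le C\,\|(1+|y|^2)^2 h(s,y)\|_{L^\infty_y}$. On the Fourier side $(1+|y|^2)^2$ corresponds to $(1-\Delta_\eta)^2$, so integrating by parts gives
\begin{equation*}
(1+|y|^2)^2 h(s,y)=\int_{\R^3}(1-\Delta_\eta)^2\bigl(\phi(\eta)e^{-s|\eta|^2}\bigr)\,e^{iy\cdot\eta}\,d\eta,
\end{equation*}
whence $\|(1+|y|^2)^2 h(s,\cdot)\|_{L^\infty}\le\int_{\mathcal{C}'}\bigl|(1-\Delta_\eta)^2(\phi\,e^{-s|\eta|^2})\bigr|\,d\eta$. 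Expanding the derivatives produces finitely many terms, each carrying a factor $e^{-s|\eta|^2}$ together with a polynomial in $s$ and $\eta$; since $\mathcal{C}'$ is bounded and bounded away from $0$, on its support $e^{-s|\eta|^2}\le e^{-c_0 s}$ for some $c_0>0$, and the polynomial prefactors in $s$ are absorbed into a slightly smaller exponential rate. This gives $\|h(s,\cdot)\|_{L^1}\le Ce^{-cs}$, hence $\|g_{t,\lambda}\|_{L^1}\le Ce^{-c\lambda^2 t}$. Combined with Young's inequality this proves the sharp decay estimate $\|e^{t\Delta}u\|_{L^p}\le Ce^{-c\lambda^2 t}\|u\|_{L^p}$, which a fortiori yields the stated bound. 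The only delicate point is this last step: one must gain enough spatial decay in $y$ to secure $L^1$ integrability while retaining the Gaussian factor $e^{-cs}$ coming from the fact that the ring keeps $|\eta|$ away from the origin; the bookkeeping of the polynomial-in-$s$ terms is routine but is the crux of the argument.
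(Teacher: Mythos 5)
Your proof is correct, and it is precisely the standard argument for this classical lemma: the paper itself supplies no proof, deferring to \cite{MR2768550} (Lemma 2.4 there), whose proof is the same reduction to $\lambda=1$ by scaling, followed by the weighted--$L^\infty$ estimate of the kernel $(1+|y|^2)^2 h(s,y)$ via integration by parts with $(1-\Delta_\eta)^2$ and Young's inequality uniformly in $p$. The one substantive remark is the sign: you establish the decay bound $\|e^{t\Delta}u\|_{L^p}\le Ce^{-c\lambda^2 t}\|u\|_{L^p}$, whereas the statement as printed carries $e^{+c\lambda^2 t}$; this is a typo in the paper, since the decay version is what is actually used later (e.g.\ in the derivation of \eqref{uu}), so the stronger inequality you prove is the one that is needed, and your handling of the polynomial-in-$s$ factors against $e^{-c_0 s}$ (possible only because the ring keeps $|\eta|$ bounded away from $0$) is exactly the crux.
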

\begin{proposition}\label{P_Besov}
	Let $1\leq p \leq \infty$. Then there hold:
	
	\begin{itemize}
		\item for all $s\in\mathbb R$~~and~~$1\leq p, r\leq\infty,$ 
		we have 
			\begin{equation*}
		\|D^{k}u\|_{\dot B^{s}_{p, r}}\sim\|u\|_{\dot B^{s+k}_{p, r}}.
		\end{equation*}
		\item for any $\theta\in(0, 1)$ and $\,s<\tilde s,$ we have
$$\|u\|_{\dot B^{\theta s+(1-\theta)\tilde{s}}_{p, 1}}\lesssim\|u\|_{\dot B^{s}_{p, 1}}^{\theta}\|u\|_{\dot B^{\tilde{s}}_{p, 1}}^{1-\theta}.$$
	\end{itemize}
\end{proposition}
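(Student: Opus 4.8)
The plan is to prove both assertions by reducing them to estimates on a single Littlewood--Paley block $\dot\Delta_j u$ and then reassembling the $\ell^r$ (respectively $\ell^1$) sum that defines the Besov norm. No tool beyond the two-sided Bernstein inequality of Proposition~\ref{563856} and the discrete H\"older inequality is required.

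For the derivative equivalence, I would first observe that $D^k$ is a constant-coefficient Fourier multiplier and therefore commutes with the frequency projection $\dot\Delta_j$, so that $\dot\Delta_j(D^k u)=D^k(\dot\Delta_j u)$. Since $\dot\Delta_j u$ has Fourier support in the annulus $2^j\mathcal C$, applying the second estimate of Proposition~\ref{563856} with $\lambda=2^j$ gives
$$C^{-k-1}2^{jk}\|\dot\Delta_j u\|_{L^p}\leq\|D^k\dot\Delta_j u\|_{L^p}\leq C^{k+1}2^{jk}\|\dot\Delta_j u\|_{L^p},$$
with constants independent of $j$. Multiplying through by $2^{js}$ absorbs the factor $2^{jk}$ into the weight $2^{j(s+k)}$, and taking the $\ell^r(\mathbb Z)$ norm lets the uniform constants $C^{\pm(k+1)}$ pass outside the sum; this is exactly $\|D^k u\|_{\dot B^s_{p,r}}\sim\|u\|_{\dot B^{s+k}_{p,r}}$.

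For the interpolation inequality I would exploit the multiplicative structure of the dyadic weights. Writing $\sigma:=\theta s+(1-\theta)\tilde s$, each block satisfies the pointwise identity
$$2^{j\sigma}\|\dot\Delta_j u\|_{L^p}=\bigl(2^{js}\|\dot\Delta_j u\|_{L^p}\bigr)^{\theta}\bigl(2^{j\tilde s}\|\dot\Delta_j u\|_{L^p}\bigr)^{1-\theta}.$$
Summing over $j\in\mathbb Z$ and applying the discrete H\"older inequality with conjugate exponents $1/\theta$ and $1/(1-\theta)$ yields
$$\sum_{j}2^{j\sigma}\|\dot\Delta_j u\|_{L^p}\leq\Bigl(\sum_{j}2^{js}\|\dot\Delta_j u\|_{L^p}\Bigr)^{\theta}\Bigl(\sum_{j}2^{j\tilde s}\|\dot\Delta_j u\|_{L^p}\Bigr)^{1-\theta},$$
which is precisely $\|u\|_{\dot B^{\sigma}_{p,1}}\leq\|u\|_{\dot B^{s}_{p,1}}^{\theta}\|u\|_{\dot B^{\tilde s}_{p,1}}^{1-\theta}$, in fact with constant one, so the claimed $\lesssim$ follows. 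Here the hypothesis $s<\tilde s$ serves only to guarantee that $\sigma$ lies strictly between the two endpoints; the very same H\"older argument applied to the $\ell^r$ sum delivers the analogous bound for every $r$, which is why the restriction to $r=1$ in the statement is a matter of convenience rather than necessity.

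Neither step poses a genuine difficulty, so the \emph{main obstacle} is merely bookkeeping: one must check that the commutation $\dot\Delta_j D^k=D^k\dot\Delta_j$ is legitimate on $\mathcal S'_h(\mathbb R^3)$ and that the Bernstein constants are genuinely uniform in $j$, since it is exactly this uniformity that permits pulling them outside the $\ell^r$ norm.
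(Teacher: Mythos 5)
Your proof is correct and is exactly the standard argument (spectral localization plus the two-sided Bernstein inequality for the first item, log-convexity of the dyadic weights plus discrete H\"older for the second) that the paper implicitly relies on: the proposition is stated without proof and referred to \cite{MR2768550}, where this is how it is established. No discrepancy to report.
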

\begin{lemma}\label{FM}
Let $f$ be a smooth function on $\mathbb{R}^{3}\setminus\{0\}$ which is homogeneous of degree m.  Let $1\leq p, r\leq \infty.$ Assume that
$$s-m<\frac{3}{p}, \quad{\rm{or}}\quad s-m=\frac{3}{p} \quad{\rm{and}}\quad  r=1.$$
Define $f(D)$
			on ${\mathcal S}'_h({\mathbb R}^3)$ by
			\begin{equation*}
			\mathcal{F}(f(D)u)(\xi){:=}f(\xi)\mathcal{F}u(\xi),
			\end{equation*}
			and assume that $f(D)$ maps ${\mathcal S}'_h({\mathbb R}^3)$ to itself. 
			Then $f(D)$ is continuous from $\dot B^{s}_{p, r}({\mathbb R}^3)$ to $\dot B^{s-m}_{p, r}({\mathbb R}^3)$.
\end{lemma}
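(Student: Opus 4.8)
The plan is to run the argument blockwise through the Littlewood--Paley decomposition and to convert the homogeneity of $f$ into a $j$-independent bound by rescaling the frequency-localized symbol. Since $\dot\Delta_j$ and $f(D)$ are both Fourier multipliers they commute, so $\dot\Delta_j\bigl(f(D)u\bigr)=f(D)\dot\Delta_j u$ for every $j\in\Z$. In view of the definition of the $\dot B^{s-m}_{p,r}$ norm it therefore suffices to establish the uniform estimate
\begin{equation*}
\|f(D)\dot\Delta_j u\|_{L^p}\leq C\,2^{jm}\|\dot\Delta_j u\|_{L^p},\qquad j\in\Z,
\end{equation*}
with $C$ independent of $j$; multiplying by $2^{j(s-m)}$ and taking the $\ell^r(\Z)$ norm then yields at once $\|f(D)u\|_{\dot B^{s-m}_{p,r}}\lesssim\|u\|_{\dot B^{s}_{p,r}}$.

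To prove the block estimate I would first fix an auxiliary cut-off $\wt\varphi\in\mathscr D(\R^3\setminus\{0\})$ supported in a slightly larger annulus and satisfying $\wt\varphi\equiv1$ on the support of $\varphi$. Since the spectrum of $\dot\Delta_j u$ is contained in $2^j\mathcal C$, inserting this cut-off is free: $f(D)\dot\Delta_j u=\bigl(f\,\wt\varphi(2^{-j}\cdot)\bigr)(D)\dot\Delta_j u$. The homogeneity of degree $m$ now gives, for $\xi\neq0$,
\begin{equation*}
f(\xi)\,\wt\varphi(2^{-j}\xi)=2^{jm}\,(f\wt\varphi)(2^{-j}\xi),
\end{equation*}
so that, setting $g:=f\wt\varphi\in\mathscr D(\R^3\setminus\{0\})$ and $k:=\mathcal F^{-1}g\in\mathcal S(\R^3)$, one obtains $f(D)\dot\Delta_j u=2^{jm}\,k_j*\dot\Delta_j u$ with $k_j(x):=2^{3j}k(2^jx)$. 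The crucial point is that $\|k_j\|_{L^1}=\|k\|_{L^1}$ is independent of $j$ by scale invariance, and is finite because $g$ is smooth and compactly supported so that $k$ is Schwartz. Young's convolution inequality then delivers the block estimate with $C=\|k\|_{L^1}$, and the whole first step is essentially computational.

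Summing the weighted blocks yields the desired seminorm bound; it remains only to argue that $f(D)u$ is a genuine element of the target space rather than an object of merely finite seminorm. Here the hypothesis that $f(D)$ maps $\mathcal S'_h(\R^3)$ to itself guarantees $f(D)u\in\mathcal S'_h$, so that it coincides with the sum of its dyadic blocks; and the condition $s-m<\tfrac3p$, or $s-m=\tfrac3p$ together with $r=1$, is precisely what makes $\dot B^{s-m}_{p,r}$ a Banach space realized inside $\mathcal S'_h$, so that finiteness of the seminorm upgrades to membership with continuous dependence on $u$. I expect the block estimate to be harmless; the only genuinely delicate point, as always in the homogeneous setting, is this last realization step, where the upper restriction on $s-m$ cannot be dropped, since otherwise the low frequencies may fail to reconstruct a well-defined tempered distribution modulo polynomials.
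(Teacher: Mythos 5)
Your proof is correct. The paper does not prove this lemma itself---it is stated as a preliminary with a pointer to the reference \cite{MR2768550}---and your argument (commuting $f(D)$ with the dyadic blocks, inserting an annular cut-off equal to $1$ on ${\rm Supp}\,\varphi$, rescaling by homogeneity to obtain a $j$-uniform $L^1$ bound on the convolution kernel, applying Young's inequality, and using the index restriction $s-m<\tfrac3p$, or $s-m=\tfrac3p$ with $r=1$, together with the hypothesis $f(D):{\mathcal S}'_h\to{\mathcal S}'_h$ to handle the realization step) is precisely the standard proof given there.
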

In the next, we shall need to use Bony's decomposition from \cite{MR631751} in the homogeneous context:
$$u\,v=T_uv+T_vu+R(u, v)$$
with 
$$T_uv := \sum_{q}\dot{S}_{q-1}u{\dot\Delta_q} v,\quad {\rm{and}}\quad R(u, v) :=\sum_{q}\sum_{|q'-q|\leq 1}{\dot\Delta_q} u\dot{\Delta}_{q'}v.$$
The above operator $T$ is called the "paraproduct" whereas $R$ is called the "remainder".  
\begin{lemma}\label{P_para}
	Let $(s, p, r)\in\mathbb{R}\times[1, \infty]^2$ and $t<0$, there exists a constant $C$ such that
	$$\|T_uv\|_{\dot{B}^{s}_{p, r}}\leq C\,\|u\|_{L^\infty}\|v\|_{\dot{B}^s_{p, r}}\quad{\rm{and}}\quad \|T_uv\|_{\dot{B}^{s+t}_{p, r}}\leq C\,\|u\|_{\dot{B}^{t}_{\infty, \infty}}\|v\|_{\dot{B}^s_{p, r}}\cdotp$$
	For any $(s_1, p_1, r_1)$ and $(s_2, p_2, r_2)$ in $\mathbb{R}\times[1, \infty]^2$ there exists a constant $C$ such that  if $s_1+s_2>0$, $\frac{1}{p} \leq\frac{1}{p_1}+\frac{1}{p_2}\leq 1$ and \,$\frac{1}{r} \leq\frac{1}{r_1}+\frac{1}{r_2}\leq 1$ then
	$$\|R(u, v)\|_{\dot{B}^{s}_{p, r}}\leq C\,\|u\|_{\dot{B}^{s_1}_{p_1, r_1}}\|v\|_{\dot{B}^{s_2}_{p_2, r_2}},$$
	with $s:=s_1+s_2-3(\frac{1}{p_1}+\frac{1}{p_2}-\frac{1}{p}),$ provided that $s< \frac{3}{p}$ or $s= \frac{3}{p}$ and $r=1.$
\end{lemma}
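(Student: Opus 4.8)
The plan is to reduce each of the three estimates to a discrete (quasi-)convolution inequality on $\ell^r(\Z)$ that is closed by Young's inequality, the only analytic inputs being the spectral support of the dyadic blocks and Bernstein's inequality (Proposition \ref{563856}). Fix once and for all an integer $N_0$ (depending only on $\mathrm{Supp}\,\varphi$) large enough to encode the almost-orthogonality relations recalled above. For the first paraproduct bound, observe that $\dot S_{q-1}u\,\dot\Delta_q v$ is spectrally supported in an annulus $\{c\,2^q\le|\xi|\le C\,2^q\}$, so $\dot\Delta_j(\dot S_{q-1}u\,\dot\Delta_q v)=0$ whenever $|j-q|>N_0$; hence $\dot\Delta_j T_uv=\sum_{|q-j|\le N_0}\dot\Delta_j(\dot S_{q-1}u\,\dot\Delta_q v)$. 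Using $\|\dot S_{q-1}u\|_{L^\infty}\le C\|u\|_{L^\infty}$ and summing gives $2^{js}\|\dot\Delta_j T_uv\|_{L^p}\le C\|u\|_{L^\infty}\sum_{|q-j|\le N_0}2^{(j-q)s}\bigl(2^{qs}\|\dot\Delta_q v\|_{L^p}\bigr)$. Since the kernel $2^{(j-q)s}\mathbf 1_{|j-q|\le N_0}$ has finite support it lies in $\ell^1(\Z)$ for \emph{every} $s\in\R$, and Young's inequality delivers the first estimate.

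For the second paraproduct bound I would keep the same decomposition but estimate the low-frequency factor differently: for $t<0$, $\|\dot S_{q-1}u\|_{L^\infty}\le\sum_{q'\le q-2}\|\dot\Delta_{q'}u\|_{L^\infty}\le\|u\|_{\dot B^t_{\infty,\infty}}\sum_{q'\le q-2}2^{-q't}\le C\,2^{-qt}\|u\|_{\dot B^t_{\infty,\infty}}$, the geometric series converging precisely because $t<0$. Inserting this and regrouping the powers of $2$ gives $2^{j(s+t)}\|\dot\Delta_j T_uv\|_{L^p}\le C\|u\|_{\dot B^t_{\infty,\infty}}\sum_{|q-j|\le N_0}2^{(j-q)(s+t)}\bigl(2^{qs}\|\dot\Delta_q v\|_{L^p}\bigr)$, again a finitely supported convolution, so Young's inequality closes the estimate for every $s$.

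For the remainder I would group $R(u,v)=\sum_\nu \wt R_\nu$ with $\wt R_\nu:=\dot\Delta_\nu u\sum_{|q'-\nu|\le1}\dot\Delta_{q'}v$; each $\wt R_\nu$ is spectrally supported in a \emph{ball} of radius $\sim 2^\nu$, so $\dot\Delta_j\wt R_\nu=0$ unless $\nu\ge j-N_0$. Setting $\tfrac1{\wt p}:=\tfrac1{p_1}+\tfrac1{p_2}\ (\le1)$, Hölder's inequality yields $2^{\nu(s_1+s_2)}\|\wt R_\nu\|_{L^{\wt p}}\le C\,a_\nu b_\nu$ after absorbing the three shifts, where $a_\nu:=2^{\nu s_1}\|\dot\Delta_\nu u\|_{L^{p_1}}\in\ell^{r_1}$ and $b_\nu:=2^{\nu s_2}\|\dot\Delta_\nu v\|_{L^{p_2}}\in\ell^{r_2}$. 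Summing over $\nu\ge j-N_0$ produces the lower-triangular kernel $2^{(j-\nu)(s_1+s_2)}\mathbf 1_{\nu\ge j-N_0}$, which lies in $\ell^1(\Z)$ exactly when $s_1+s_2>0$; Young's inequality together with the sequence Hölder inequality (using $\tfrac1r\le\tfrac1{r_1}+\tfrac1{r_2}$) then gives $\|R(u,v)\|_{\dot B^{s_1+s_2}_{\wt p,r}}\le C\|u\|_{\dot B^{s_1}_{p_1,r_1}}\|v\|_{\dot B^{s_2}_{p_2,r_2}}$. Finally I would pass from $\wt p$ to $p\ge\wt p$ by applying the first inequality of Proposition \ref{563856} termwise to the annulus-supported block $\dot\Delta_j R(u,v)$, namely $\|\dot\Delta_j R\|_{L^p}\le C\,2^{3j(\frac1{\wt p}-\frac1p)}\|\dot\Delta_j R\|_{L^{\wt p}}$; multiplying by $2^{js}$ and using $s+3(\tfrac1{\wt p}-\tfrac1p)=s_1+s_2$ yields $\|R(u,v)\|_{\dot B^{s}_{p,r}}\le C\|R(u,v)\|_{\dot B^{s_1+s_2}_{\wt p,r}}$ with $s=s_1+s_2-3(\tfrac1{p_1}+\tfrac1{p_2}-\tfrac1p)$. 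The restriction $s<\tfrac3p$, or $s=\tfrac3p$ with $r=1$, is exactly what guarantees that the formal series $\sum_j\dot\Delta_j R(u,v)$ converges in $\mathcal S'_h(\R^3)$, so that the left-hand norm is meaningful.

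The main subtlety I would be most careful about is the bookkeeping of the two opposite convolution geometries: the paraproduct localizes to $|j-q|\le N_0$, a finitely supported and hence unconditionally summable kernel, which is why its bounds hold for all $s\in\R$, whereas the remainder only localizes to the half-line $\nu\ge j-N_0$, so its kernel is summable solely under the sign condition $s_1+s_2>0$. A second point requiring care is the order of operations in the remainder: the gain of integrability via Bernstein must be applied to the fully localized block $\dot\Delta_j R(u,v)$ \emph{after} reassembling the pieces in $L^{\wt p}$; applying Bernstein to the ball-supported summands $\wt R_\nu$ before summing would entangle the frequency gain $2^{3\nu(\frac1{\wt p}-\frac1p)}$ with the triangular sum and spuriously force $s>0$ instead of the correct $s_1+s_2>0$. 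Everything else is routine Hölder and Young manipulation.
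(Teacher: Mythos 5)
Your argument is correct and is essentially the standard textbook proof of Bony's paraproduct and remainder estimates; the paper itself gives no proof of this lemma, merely recalling it from \cite{MR2768550,MR631751}, and your three reductions (finitely supported convolution kernel for $T_uv$, the geometric series in $2^{-q't}$ requiring $t<0$, and the half-line kernel $2^{(j-\nu)(s_1+s_2)}\mathbf 1_{\nu\ge j-N_0}$ requiring $s_1+s_2>0$, followed by Bernstein on the annulus-supported block $\dot\Delta_jR$) match that reference's treatment. Your closing remark about the order in which Bernstein is applied to the remainder is accurate and is precisely the point that makes the hypothesis $s_1+s_2>0$ rather than $s>0$ the right one.
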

As an application of the above basic facts on Littlewood-Paley theory, the following product laws in Besov spaces will play a crucial role in the sequel . 

\begin{lemma}\label{law1}(see \cite{MR2889168})
	Let $q\geq p\geq 1,$ and $s_1\leq \frac{3}{p},$ $s_2\leq \frac{3}{q}$ with
	\begin{equation*}\label{562753}
	s_1+s_2\geq 3\min\{ 0, \frac{1}{p}+\frac{1}{q}-1\}.
	\end{equation*}
	Let $a\in \dot{B}^{s_1}_{p, 1}(\R^3),$ $b\in \dot{B}^{s_2}_{q, 1}(\R^3)$. Then $a \,b \in \dot B^{s_1+s_2-\frac{3}{p}}_{q, 1}(\R^3),$ and 
	\begin{equation}\label{law_1}
	\|a\,b\|_{\dot B^{s_1+s_2-\frac{3}{p}}_{q, 1}}\lesssim \|a\|_{\dot B^{s_1}_{p, 1}} \|b\|_{\dot B^{s_2}_{q, 1}}\cdotp
	\end{equation}
\end{lemma}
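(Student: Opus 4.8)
\emph{Strategy.} The plan is to use Bony's homogeneous decomposition recalled above,
$$ab=T_ab+T_ba+R(a,b),$$
and to bound each of the three pieces in $\dot B^{s}_{q,1}$ with $s:=s_1+s_2-\tfrac3p$. The three hypotheses match the three pieces: the constraint $s_1\le\tfrac3p$ controls $T_ab$, the constraint $s_2\le\tfrac3q$ controls $T_ba$, and the lower bound on $s_1+s_2$ is exactly what is needed to sum the remainder $R(a,b)$. Since the two factors carry different integrabilities, I reduce everything to single-integrability estimates by means of Bernstein's inequality (Proposition \ref{563856}) and the resulting embeddings $\dot B^{\sigma}_{p,1}\hookrightarrow\dot B^{\sigma-3(\frac1p-\frac1q)}_{q,1}$ (valid because $p\le q$) and $\dot B^{\sigma}_{p,1}\hookrightarrow\dot B^{\sigma-\frac3p}_{\infty,\infty}$.

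\emph{The paraproducts.} For $T_ab=\sum_k\dot S_{k-1}a\,\dot\Delta_k b$ I first treat the generic case $s_1<\tfrac3p$: setting $t:=s_1-\tfrac3p<0$ and using the second paraproduct estimate of Lemma \ref{P_para} together with $\dot B^{s_1}_{p,1}\hookrightarrow\dot B^{t}_{\infty,\infty}$, I obtain
$$\|T_ab\|_{\dot B^{s_2+t}_{q,1}}\lesssim\|a\|_{\dot B^{t}_{\infty,\infty}}\|b\|_{\dot B^{s_2}_{q,1}}\lesssim\|a\|_{\dot B^{s_1}_{p,1}}\|b\|_{\dot B^{s_2}_{q,1}},$$
and $s_2+t=s$; the endpoint $s_1=\tfrac3p$ is handled by the first estimate of Lemma \ref{P_para} and $\dot B^{\frac3p}_{p,1}\hookrightarrow L^\infty$. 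Symmetrically, for $T_ba=\sum_k\dot S_{k-1}b\,\dot\Delta_k a$ the high-frequency factor is $a$, so I estimate first in the $p$-integrability: with $t=s_2-\tfrac3q<0$ and $\dot B^{s_2}_{q,1}\hookrightarrow\dot B^{t}_{\infty,\infty}$ the second estimate of Lemma \ref{P_para} gives $\|T_ba\|_{\dot B^{s_1+s_2-\frac3q}_{p,1}}\lesssim\|b\|_{\dot B^{s_2}_{q,1}}\|a\|_{\dot B^{s_1}_{p,1}}$, and the embedding $\dot B^{s_1+s_2-\frac3q}_{p,1}\hookrightarrow\dot B^{s_1+s_2-\frac3p}_{q,1}=\dot B^{s}_{q,1}$ finishes this term (again the endpoint $s_2=\tfrac3q$ uses $\dot B^{\frac3q}_{q,1}\hookrightarrow L^\infty$).

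\emph{The remainder (main obstacle).} Write $R(a,b)=\sum_k f_k$ with $f_k:=\widetilde\Delta_k a\,\dot\Delta_k b$ and $\widetilde\Delta_k:=\dot\Delta_{k-1}+\dot\Delta_k+\dot\Delta_{k+1}$. The difficulty is that $\mathrm{supp}\,\widehat{f_k}\subset\{|\xi|\lesssim 2^k\}$ is a \emph{ball}, not an annulus, so a low output frequency $2^j$ receives contributions from all $k\gtrsim j$ and convergence is delicate. The device I would use is the standard fact that for $(f_k)$ with $\mathrm{supp}\,\widehat{f_k}\subset B(0,c2^k)$ and any $\sigma>0$ one has $\|\sum_k f_k\|_{\dot B^{\sigma}_{m,1}}\lesssim\big\|(2^{k\sigma}\|f_k\|_{L^m})_k\big\|_{\ell^1}$. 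I then distribute integrability by Hölder and Bernstein, splitting according to the sign of $\tfrac1p+\tfrac1q-1$. If $\tfrac1p+\tfrac1q\le1$, set $\tfrac1r:=\tfrac1p+\tfrac1q$, so $\|f_k\|_{L^r}\le\|\widetilde\Delta_k a\|_{L^p}\|\dot\Delta_k b\|_{L^q}$; since $(2^{ks_1}\|\widetilde\Delta_k a\|_{L^p})_k$ and $(2^{ks_2}\|\dot\Delta_k b\|_{L^q})_k$ lie in $\ell^1$, so does their product, and the fact above with $m=r$, $\sigma=s_1+s_2$ gives $R(a,b)\in\dot B^{s_1+s_2}_{r,1}$, whence $\dot B^{s_1+s_2}_{r,1}\hookrightarrow\dot B^{s_1+s_2-3(\frac1r-\frac1q)}_{q,1}=\dot B^{s}_{q,1}$ (note $\tfrac1r-\tfrac1q=\tfrac1p$ and $r\le q$). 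If $\tfrac1p+\tfrac1q>1$, I instead place $f_k$ in $L^1$ via $\|f_k\|_{L^1}\le\|\widetilde\Delta_k a\|_{L^p}\|\dot\Delta_k b\|_{L^{p'}}$ and upgrade $\dot\Delta_k b$ from $L^q$ to $L^{p'}$ by Bernstein, paying $2^{3k(\frac1p+\frac1q-1)}$; applying the fact with $m=1$, $\sigma=s_1+s_2-3(\tfrac1p+\tfrac1q-1)$ and embedding $\dot B^{\sigma}_{1,1}\hookrightarrow\dot B^{s}_{q,1}$ concludes.

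\emph{Where the crux lies.} In both regimes the series index $\sigma$ must be \emph{strictly positive}, i.e. $s_1+s_2>3\max\{0,\tfrac1p+\tfrac1q-1\}$, and this positivity is precisely what the assumed lower bound on $s_1+s_2$ provides; the paraproduct terms, by contrast, are comparatively soft and need only the caps $s_1\le\tfrac3p$ and $s_2\le\tfrac3q$. The genuinely delicate point I expect to spend the most care on is the borderline behaviour of this remainder sum when $s_1+s_2$ sits exactly at its threshold, where the summation over $j$ must be tracked through the $\ell^1$ structure rather than by a crude geometric bound, together with the verification that the target index $s$ satisfies $s<\tfrac3q$ (or $s=\tfrac3q$) so that the final $\dot B^{s}_{q,1}$ statement is legitimate.
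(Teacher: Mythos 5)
Your overall strategy -- Bony's decomposition, the two paraproducts via Lemma \ref{P_para} combined with the embeddings $\dot B^{\sigma}_{p,1}\hookrightarrow\dot B^{\sigma-3(\frac1p-\frac1q)}_{q,1}$ and $\dot B^{\sigma}_{p,1}\hookrightarrow\dot B^{\sigma-\frac3p}_{\infty,\infty}$, and the remainder via H\"older, Bernstein and the ball-support summation lemma -- is the standard route and matches what the paper does for the companion Lemma \ref{law0}; for Lemma \ref{law1} itself the paper gives no proof and simply cites \cite{MR2889168}. Your treatment of $T_ab$ and $T_ba$, including the endpoints $s_1=\frac3p$ and $s_2=\frac3q$, is correct, and the index bookkeeping in both remainder regimes checks out.

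The gap is in the remainder, exactly where you locate the ``crux'', and you do not close it. Your argument needs $s_1+s_2>3\max\{0,\frac1p+\frac1q-1\}$, and you assert that this positivity ``is precisely what the assumed lower bound on $s_1+s_2$ provides''. It is not: the stated hypothesis is $s_1+s_2\ge 3\min\{0,\frac1p+\frac1q-1\}$, which permits $s_1+s_2$ to be negative when $\frac1p+\frac1q\le1$, and gives only $s_1+s_2\ge0$ when $\frac1p+\frac1q>1$, whereas your $L^1$ estimate of $f_k$ pays a factor $2^{3k(\frac1p+\frac1q-1)}$ and hence needs the strictly larger threshold. Moreover the borderline case you defer cannot be rescued: take $p=q=2$ and $s_1=s_2=0$ (admissible under the hypothesis as written) and $a=b$ a real, not identically zero Schwartz function. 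Then $ab=a^2$ has nonzero integral, so $2^{-\frac32 j}\|\dot\Delta_j(a^2)\|_{L^2}$ tends to a positive constant as $j\to-\infty$ and $a^2\notin\dot B^{-\frac32}_{2,1}$, although $a\in\dot B^{0}_{2,1}$; the inequality \eqref{law_1} is therefore false at that endpoint. The lemma must be read with $s_1+s_2>3\max\{0,\frac1p+\frac1q-1\}$ -- which is what \cite{MR2889168} actually proves and what every application in Section \ref{se} satisfies -- and under that reading your proof is complete; as a proof of the statement as literally written, it both fails to derive the positivity it needs from the given hypothesis and aims at a conclusion that does not hold.
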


\begin{lemma}\label{law0}
	Let $q\geq p\geq 1$ and
	\begin{equation*}\label{562752}
	\frac{1}{q}-\frac{1}{p}\geq -\min\{ \frac{1}{3}, \frac{1}{2p}\}.
	\end{equation*}
	Assume $\theta$ satisfies
	$$\frac{3}{p}-\frac{3}{q}\leq\theta\leq 1.$$
	Let $a, b\in \dot{B}^{\frac{3}{q}-\theta}_{q, 1}(\R^3)\cap\dot{B}^{\frac{3}{q}+\theta}_{q, 1}(\R^3)$. Then $a\,b \in \dot B^{ \frac{3}{p }}_{p, 1}(\R^3),$ and 
	\begin{equation}\label{law_0}
	\|a\,b\|_{\dot B^{\frac{3}{p}}_{p, 1}}\lesssim \|a\|_{\dot B^{\frac{3}{q}-\theta}_{q, 1}} \|b\|_{\dot B^{\frac{3}{q}+\theta}_{q, 1}}+\|a\|_{\dot B^{\frac{3}{q}+\theta}_{q, 1}} \|b\|_{\dot B^{\frac{3}{q}-\theta}_{q, 1}}\cdotp
	\end{equation}
Let $a, b\in \dot{B}^{\frac{6}{q}-\frac3p}_{q, 1}(\R^3)\cap\dot{B}^{\frac{3}{q}}_{q, 1}(\R^3)$. Then $a\,b \in \dot B^{ \frac{3}{q }}_{p, 1}(\R^3),$ and 
\begin{equation}\label{law_000}
	\|a\,b\|_{\dot B^{\frac{3}{q}}_{p, 1}}\lesssim \|a\|_{\dot B^{\frac{3}{q}}_{q, 1}} \|b\|_{\dot B^{\frac{6}{q}-\frac3p}_{q, 1}}+\|a\|_{\dot B^{\frac{6}{q}-\frac3p}_{q, 1}} \|b\|_{\dot B^{\frac{3}{q}}_{q, 1}}\cdotp
	\end{equation}
\end{lemma}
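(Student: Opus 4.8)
\emph{Strategy.} Both inequalities are product laws, so the natural route is Bony's decomposition
$$ab=T_ab+T_ba+R(a,b),$$
estimating the three contributions separately by Hölder's inequality together with the Bernstein inequalities of Proposition \ref{563856} to move between the integrability exponents $q$, $2p$ and $p$. The hypotheses enter transparently in this language. The bound $\frac1q-\frac1p\geq-\frac1{2p}$ is exactly $q\le 2p$, which is what makes the Hölder embedding $L^{2p}\times L^{2p}\hookrightarrow L^p$ and the block Bernstein embedding $L^q\hookrightarrow L^{2p}$ (at the cost of $2^{3j(\frac1q-\frac1{2p})}$) admissible with every exponent in $[1,\infty]$; the bound $\frac1q-\frac1p\geq-\frac13$ guarantees $\frac3p-\frac3q\le1$, i.e. that the range $[\frac3p-\frac3q,1]$ for $\theta$ in the first statement is nonempty. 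For definiteness I abbreviate $\gamma:=\frac6q-\frac3p$; note $\gamma\ge0$ under $q\le2p$.

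\emph{Paraproducts.} For $T_ab=\sum_j\dot S_{j-1}a\,\dot\Delta_jb$, only a bounded band $|j-k|\le C$ contributes to $\dot\Delta_k(T_ab)$. Writing Hölder with $\frac1p=(\frac1p-\frac1q)+\frac1q$, I bound $\|\dot S_{j-1}a\,\dot\Delta_jb\|_{L^p}\le\|\dot S_{j-1}a\|_{L^{p_1}}\|\dot\Delta_jb\|_{L^q}$ with $\frac1{p_1}=\frac1p-\frac1q\le\frac1q$ (again $q\le2p$), so that block Bernstein gives $\|\dot S_{j-1}a\|_{L^{p_1}}\lesssim\sum_{l\le j-2}2^{l\gamma}\|\dot\Delta_la\|_{L^q}$. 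Assigning the lower regularity to the low-frequency factor and the higher one to the high-frequency factor, the $j$-exponent telescopes to exactly the target regularity ($\frac3p$ in the first estimate, $\frac3q$ in the second), and the two choices of which factor plays the low-frequency role produce the two terms on the right-hand side. The low-frequency sum $\sum_{l\le j-2}2^{l(\gamma-\rho)}$ converges geometrically precisely when the regularity $\rho$ of the low factor satisfies $\rho\le\gamma$: this is $\frac3q-\theta\le\gamma$ (i.e. $\theta\ge\frac3p-\frac3q$) in the first estimate and $\frac6q-\frac3p=\gamma$ in the second, the borderline equality being absorbed by the index $r=1$.

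\emph{Remainder.} For $R(a,b)=\sum_j\dot\Delta_ja\,\widetilde{\dot\Delta}_jb$ the product is spectrally supported in a ball of radius $2^j$, so $\dot\Delta_kR(a,b)=\sum_{j\ge k-C}\dot\Delta_k(\dot\Delta_ja\,\widetilde{\dot\Delta}_jb)$. Here I deliberately avoid splitting the product in $L^{q/2}$ (which would fall outside $[1,\infty]$ when $q<2$); instead I first raise each block to $L^{2p}$ by Bernstein, $\|\dot\Delta_ja\|_{L^{2p}}\lesssim2^{3j(\frac1q-\frac1{2p})}\|\dot\Delta_ja\|_{L^q}$, and then apply $L^{2p}\times L^{2p}\hookrightarrow L^p$, the two Bernstein factors combining to $2^{j\gamma}$. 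The summand then carries $2^{j(\gamma-\rho_1-\rho_2)}$, equal to $2^{-3j/p}$ in the first estimate and $2^{-3j/q}$ in the second, both with negative exponent; the tail sum over $j\ge k-C$ converges and, after the standard rearrangement against the $2^{k\cdot(\mathrm{target})}$ weight, yields the claimed norm with an $\ell^1$ constant. The only requirement is positivity of the total regularity, namely $\rho_1+\rho_2=\frac6q>0$ (first estimate) and $\rho_1+\rho_2=\frac9q-\frac3p>0$ (second estimate), the latter following from $q\le2p\Rightarrow p\ge\frac q2>\frac q3$.

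\emph{Main obstacle.} The genuinely delicate points are the two borderline situations: the endpoint $\theta=\frac3p-\frac3q$ in the paraproduct, where the low-frequency geometric series degenerates, and the critical output regularity $\frac3p$ (resp. $\frac3q$); both are exactly critical and so rely on the summability index $r=1$ rather than on spare decay. Care is also needed to verify once and for all that every auxiliary exponent ($p_1$ and $2p$) lies in $[1,\infty]$, which is where $q\le2p$ is indispensable and is precisely why the result cannot be obtained by a blunt invocation of the remainder part of Lemma \ref{P_para} (which would demand $\frac2q\le1$, i.e. $q\ge2$) and must instead be run through the $L^{2p}$ splitting above.
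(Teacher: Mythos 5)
Your proof is correct and follows essentially the same route as the paper: Bony's decomposition with the paraproducts estimated exactly as in the paper (H\"older against the auxiliary exponent $\tfrac{pq}{q-p}$, Bernstein back to $L^q$ at the cost of $2^{l(\frac6q-\frac3p)}$, and the borderline geometric sum absorbed by the $\ell^1$ summability index). The only cosmetic difference is in the remainder, where the paper invokes the abstract remainder estimate of Lemma \ref{P_para} with the pair $\bigl(\tfrac{pq}{q-p},q\bigr)$ --- so that $\tfrac1{p_1}+\tfrac1{p_2}=\tfrac1p\le 1$ and the $q\ge 2$ obstruction you flag never arises --- whereas you run the equivalent symmetric $L^{2p}\times L^{2p}\to L^p$ computation by hand; both are valid precisely under $q\le 2p$.
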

\begin{proof}
The proof is standard, we follow the method of \cite{MR2889168} and only give a proof of \eqref{law_0}. In fact, we shall focus on the case $q>p,$ since when $q=p,$ it is obvious.

\noindent By Bony's decomposition, we can write
$$ab=T_a b+T_b a +R(a, b).$$
By H\"older's inequality and Proposition \ref{563856}, under $p<q\leq 2p$ we have 
\begin{align*}
2^{\frac{3}{p}j}\|S_{j-1} a \dot\Delta_j b\|_{L^p}&\lesssim (2^{(\frac{3}{q}+\theta)j}\|\dot\Delta_j b\|_{L^q})\sum_{k\leq j-2}2^{(\frac{3}{p}-\frac{3}{q}-\theta)k}\|\dot\Delta_k a\|_{L^{\frac{pq}{q-p}}}2^{(\frac{3}{p}-\frac{3}{q}-\theta)(j-k)}\\
&\lesssim (2^{(\frac{3}{q}+\theta)j}\|\dot\Delta_j b\|_{L^q})\sum_{k\leq j-2}2^{(\frac{3}{q}-\theta)k}\|\dot\Delta_k a\|_{L^{q}}2^{(\frac{3}{p}-\frac{3}{q}-\theta)(j-k)}.
\end{align*}
Thus by Young's inequality with $\frac{3}{p}-\frac{3}{q}\leq\theta$
\begin{align*}
\|T_a b\|_{\dot{B}^{\frac{3}{p}}_{p, 1}}\lesssim \|a\|_{\dot B^{\frac{3}{q}-\theta}_{q, 1}} \|b\|_{\dot B^{\frac{3}{q}+\theta}_{q, 1}}\cdotp
\end{align*}
Similarly,
\begin{align*}
\|T_b a\|_{\dot{B}^{\frac{3}{p}}_{p, 1}}\lesssim \|a\|_{\dot B^{\frac{3}{q}+\theta}_{q, 1}} \|b\|_{\dot B^{\frac{3}{q}-\theta}_{q, 1}}\cdotp
\end{align*}
Thanks to Lemma \ref{P_para}, we have
\begin{align*}
\|R(a, b)\|_{\dot{B}^{\frac{3}{p}}_{p, 1}}&\lesssim \|a\|_{\dot B^{\frac{3}{p}-\frac3q-\theta}_{\frac{pq}{q-p}, \infty}} \|b\|_{\dot B^{\frac{3}{q}+\theta}_{q, 1}}\\
&\lesssim \|a\|_{\dot B^{\frac{3}{q}-\theta}_{q, 1}} \|b\|_{\dot B^{\frac{3}{q}+\theta}_{q, 1}}.
\end{align*}
\end{proof}

\begin{lemma}\label{law2}
	Let $1\leq q < \infty.$ For any homogeneous function $\sigma$ of degree - 1 smooth outside of 0, there hold:
	\item-- let $a \in \dot B^{\frac{3}{q}-1}_{q, 1}$ and $b \in \dot B^{\frac{3}{q}+1}_{q, 1},$ then 
	\begin{equation}\label{5682396529}
	\|(\sigma(D)a )\cdot\nabla b\|_{\dot B^{\frac{3}{q} }_{q, 1} }\lesssim \|a \|_{ \dot B^{\frac{3}{q}-1  }_{q, 1} } \|b \|_{ \dot B^{\frac{3}{q}+1  }_{q, 1} }.
		\end{equation}
	\item-- let $a \in \dot B^{\frac{3}{q}}_{q, 1}$ and $b \in \dot B^{\frac{3}{q}}_{q, 1},$ then 
	\begin{equation}\label{law_3}
	\|a\cdot\nabla(\sigma(D)b)\|_{\dot B^{\frac{3}{q} }_{q, 1} }\lesssim \|a \|_{ \dot B^{\frac{3}{q}  }_{q, 1} } \|b \|_{ \dot B^{\frac{3}{q}  }_{q, 1} }.
		\end{equation}
\end{lemma}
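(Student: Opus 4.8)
The plan is to reduce both inequalities to the algebra structure of the critical space $\dot B^{\frac{3}{q}}_{q,1}$, after first using Lemma \ref{FM} to relocate the homogeneous multiplier $\sigma(D)$ and to convert it into a gain (or a preservation) of regularity. The underlying bookkeeping is that $\sigma(D)$ is homogeneous of degree $-1$ and hence smooths by one derivative, which is exactly compensated by the gradient appearing in each product; both products therefore sit at the critical level $\dot B^{\frac{3}{q}}_{q,1}$.

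For \eqref{5682396529} I would argue as follows. Since $\sigma$ is smooth away from the origin and homogeneous of degree $m=-1$, Lemma \ref{FM} applies at the endpoint $s-m=\frac{3}{q}=\frac3p$ with $r=1$, so $\sigma(D)$ maps $\dot B^{\frac{3}{q}-1}_{q,1}$ continuously into $\dot B^{\frac{3}{q}}_{q,1}$; thus $\|\sigma(D)a\|_{\dot B^{\frac{3}{q}}_{q,1}}\lesssim\|a\|_{\dot B^{\frac{3}{q}-1}_{q,1}}$. By the first item of Proposition \ref{P_Besov}, $\|\nabla b\|_{\dot B^{\frac{3}{q}}_{q,1}}\sim\|b\|_{\dot B^{\frac{3}{q}+1}_{q,1}}$. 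It then suffices to multiply two functions of $\dot B^{\frac{3}{q}}_{q,1}$ back into $\dot B^{\frac{3}{q}}_{q,1}$, which is the case $p=q$, $s_1=s_2=\frac{3}{q}$ of Lemma \ref{law1}: there $s_1+s_2-\frac3p=\frac3q$, the constraints $s_1,s_2\le\frac3q$ hold with equality, and $s_1+s_2=\frac6q\ge 3\min\{0,\frac2q-1\}$ for every $q\ge1$. Combining the three bounds yields \eqref{5682396529}.

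For \eqref{law_3} I would first push the gradient inside the multiplier: each component of $\nabla\sigma(D)$ has symbol $i\xi_j\sigma(\xi)$, which is homogeneous of degree $0$ and smooth off the origin. Lemma \ref{FM} with $m=0$ (again at the endpoint $s-m=\frac3q=\frac3p$, $r=1$) shows that $\nabla\sigma(D)$ maps $\dot B^{\frac{3}{q}}_{q,1}$ into itself, so $\|\nabla(\sigma(D)b)\|_{\dot B^{\frac{3}{q}}_{q,1}}\lesssim\|b\|_{\dot B^{\frac{3}{q}}_{q,1}}$. Since $a\in\dot B^{\frac{3}{q}}_{q,1}$ as well, the same product estimate from Lemma \ref{law1} applied componentwise to $a\cdot\nabla(\sigma(D)b)$ closes the bound.

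The only point that truly needs attention is the use of Lemma \ref{FM} in the endpoint regime: here $s-m$ lands exactly on the critical index $\frac3q=\frac3p$, so one must keep the summation index equal to $1$ throughout, and one must check that $\sigma(D)$ and $\partial_j\sigma(D)$ genuinely act on ${\mathcal S}'_h(\mathbb R^3)$, which holds because $\sigma$ is homogeneous and smooth outside the origin. Everything else is just the algebra property of the critical Besov space, read off from Lemma \ref{law1}; should one prefer to avoid invoking that product law, the same two estimates can be obtained directly by Bony's decomposition, following verbatim the computation in the proof of Lemma \ref{law0}.
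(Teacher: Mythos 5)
Your proposal is correct and follows essentially the same route as the paper: Lemma \ref{FM} to absorb the degree $-1$ multiplier $\sigma(D)$ (at the endpoint $s-m=\frac3q$ with third index $1$), combined with the algebra property of $\dot B^{\frac3q}_{q,1}$ obtained from Lemma \ref{law1}. The only cosmetic difference is in \eqref{law_3}, where you treat $\nabla\sigma(D)$ as a degree-$0$ multiplier whereas the paper commutes the gradient and applies $\sigma(D)$ to $\nabla b\in\dot B^{\frac3q-1}_{q,1}$; the two are interchangeable.
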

\begin{proof}
Thanks to Lemma \ref{law1}, we know that when $1\leq q<\infty,$ $\dot{B}^{\frac{3}{q}}_{q, 1}$ is an algebra, thus by Lemma \ref{FM}, we have
\begin{align*}
\|(\sigma(D)a )\cdot\nabla b\|_{\dot B^{\frac{3}{q} }_{q, 1} }&\lesssim \|\sigma(D)a \|_{ \dot B^{\frac{3}{q} }_{q, 1} } \|\nabla b \|_{ \dot B^{\frac{3}{q}  }_{q, 1} }\\
&\lesssim \|a \|_{ \dot B^{\frac{3}{q}-1  }_{q, 1} } \|b \|_{ \dot B^{\frac{3}{q}+1  }_{q, 1} }.
\end{align*}
Similarly, 
\begin{align*}
\|a\cdot\nabla(\sigma(D)b )\|_{\dot B^{\frac{3}{q} }_{q, 1} }&\lesssim \|a\|_{ \dot B^{\frac{3}{q} }_{q, 1} } \|\nabla(\sigma(D)b ) \|_{ \dot B^{\frac{3}{q}  }_{q, 1} }\\
&\lesssim \|a \|_{ \dot B^{\frac{3}{q}  }_{q, 1} } \|\sigma(D)\nabla b \|_{ \dot B^{\frac{3}{q}  }_{q, 1} }\\
&\lesssim \|a \|_{ \dot B^{\frac{3}{q}  }_{q, 1} } \|b\|_{ \dot B^{\frac{3}{q}  }_{q, 1} }.
\end{align*}
\end{proof}
The basic heat equation reads:
 \begin{equation}\label{5638295629}
 \left\{
\begin{split}
&\partial_{t}u-\mu\Delta u = f &\hbox{in } ~\mathbb{R}_+\times\mathbb{R}^3, \\
&u|_{t=0} = u_{0}  &\hbox{in }  ~\mathbb{R}^3.
\end{split}
\right. 
\end{equation} 
Then, it is classical that for all 
$u_{0}\in \mathcal{S}'(\mathbb R^{d})$ and $f\in L^1_{loc}(\mathbb R_{+}; \mathcal{S}'(\mathbb R^{d})),$ the heat equation \eqref{5638295629} has a unique tempered distribution solution, 
 which is given by  the following Duhamel's formula:
\begin{equation}
u(t)=e^{\mu t\Delta}u_{0}+\int_{0}^{t} e^{(t-s)\mu\Delta}f(\tau)~ds,\qquad t\geq0.\label{2.1}
\end{equation}
The following fundamental results to heat semi-group has been first proved in  \cite{MR1145160}.
\begin{lemma}\label{563865}
	Let $s>0$, $1 \leq p <\infty.$
		Assume that $u_{0}\in\dot B^{s}_{p, 1},$  then for any $\rho\in[1,\infty),$ 
		\begin{equation*}
		\lim_{T\to0}\|e^{t\mu\Delta}u_{0}\|_{{L}_T^\rho(\dot B^{s+\frac{2}{\rho}}_{p, 1})}=0. 
		\end{equation*}
\end{lemma}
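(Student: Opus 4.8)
The plan is to reduce everything to a single Littlewood--Paley block and then exploit the exponential decay of the heat semigroup on frequency annuli. First I would write $\dot\Delta_j e^{t\mu\Delta}u_{0}=e^{t\mu\Delta}\dot\Delta_j u_{0}$ and note that its Fourier transform is supported in the dilated annulus $2^{j}\mathcal C$. Applying the smoothing estimate of Lemma \ref{semi} in its decaying form (with the factor $e^{-c\mu 2^{2j}t}$, obtained by running Lemma \ref{semi} at time $\mu t$ and $\lambda=2^{j}$) yields, for every $j\in\Z$,
\begin{equation*}
\|\dot\Delta_j e^{t\mu\Delta}u_{0}\|_{L^{p}}\lesssim e^{-c\mu 2^{2j}t}\|\dot\Delta_j u_{0}\|_{L^{p}}.
\end{equation*}

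Next I would assemble the Besov norm and take the time--Lebesgue norm. Weighting the dyadic pieces by $2^{j(s+\frac2\rho)}$ and then using Minkowski's inequality (valid since $\rho\geq1$) to exchange the $\ell^{1}(\Z)$ summation with the $L^{\rho}(0,T)$ norm gives
\begin{equation*}
\|e^{t\mu\Delta}u_{0}\|_{L^{\rho}_{T}(\dot B^{s+\frac2\rho}_{p, 1})}
\lesssim \sum_{j\in\Z}2^{j(s+\frac2\rho)}\|\dot\Delta_j u_{0}\|_{L^{p}}\,\bigl\|e^{-c\mu 2^{2j}t}\bigr\|_{L^{\rho}(0,T)}.
\end{equation*}
The remaining time factor is computed explicitly,
\begin{equation*}
\bigl\|e^{-c\mu 2^{2j}t}\bigr\|_{L^{\rho}(0,T)}=\Bigl(\frac{1-e^{-c\mu\rho 2^{2j}T}}{c\mu\rho 2^{2j}}\Bigr)^{\frac1\rho}\leq C\,2^{-\frac{2j}\rho},
\end{equation*}
so that the weight $2^{-2j/\rho}$ exactly cancels the regularity gain $2^{2j/\rho}$ and the right-hand side is bounded, uniformly in $T$, by $C\|u_{0}\|_{\dot B^{s}_{p, 1}}$.

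To upgrade this uniform bound into the claimed vanishing as $T\to0$, I would invoke dominated convergence on the counting measure of $\Z$. Indeed, the general term
\begin{equation*}
a_{j}(T):=2^{j(s+\frac2\rho)}\|\dot\Delta_j u_{0}\|_{L^{p}}\Bigl(\frac{1-e^{-c\mu\rho 2^{2j}T}}{c\mu\rho 2^{2j}}\Bigr)^{\frac1\rho}
\end{equation*}
is dominated, uniformly in $T$, by $C\,2^{js}\|\dot\Delta_j u_{0}\|_{L^{p}}$, whose sum over $j$ equals $C\|u_{0}\|_{\dot B^{s}_{p, 1}}<\infty$; and for each fixed $j$ one has $a_{j}(T)\to0$ as $T\to0$, since $1-e^{-c\mu\rho 2^{2j}T}\to0$. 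The dominated convergence theorem then permits exchanging the limit with the sum, yielding $\lim_{T\to0}\sum_{j}a_{j}(T)=0$, which is the assertion.

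The genuinely delicate point is not any individual estimate --- each is routine --- but the justification of passing the limit $T\to0$ inside the infinite sum, which is precisely where the $T$-uniform summable majorant furnished by $u_{0}\in\dot B^{s}_{p, 1}$ is indispensable. It is also worth observing that the restriction $\rho<\infty$ is essential: for $\rho=\infty$ the time factor $\sup_{t\in(0,T)}e^{-c\mu 2^{2j}t}$ equals $1$ for every $T>0$ and does not vanish, in agreement with the fact that $\|e^{t\mu\Delta}u_{0}\|_{L^{\infty}_{T}(\dot B^{s}_{p, 1})}\to\|u_{0}\|_{\dot B^{s}_{p, 1}}$, which is generally nonzero.
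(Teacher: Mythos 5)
Your proof is correct. The paper itself gives no proof of this lemma (it only cites Chemin \cite{MR1145160}), and your argument --- frequency-localized heat decay from Lemma \ref{semi}, the explicit computation $\bigl\|e^{-c\mu 2^{2j}t}\bigr\|_{L^{\rho}(0,T)}\leq C2^{-2j/\rho}$ cancelling the regularity gain, and dominated convergence over $j\in\Z$ with the $T$-uniform summable majorant $C2^{js}\|\dot\Delta_j u_0\|_{L^p}$ --- is precisely the standard argument behind the cited result, including the correct observation that $\rho<\infty$ is what makes each fixed-$j$ term vanish.
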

\begin{lemma}\label{Le_27}
	Let $T>0$, $s\in\mathbb{R}$ and $1\leq p \leq\infty$. Assume that $u_{0}\in\dot B^{s}_{p, 1}$ and $f\in{L}^{1}_{T}(\dot B^{s}_{p, 1})$. Then \eqref{5638295629} has a unique solution $u$ in $\mathcal{C}([0,T]; \dot B^{s}_{p, 1})\cap{L}^{1}(0, T; \dot B^{s+2}_{p, 1})$ and there exists a constant $C$ such that 
	\begin{equation}\label{2.2}
	\|u\|_{{L}^\infty_TL\dot B^{s}_{p, 1}}+\mu\|u\|_{{L}^{1}_{T}(\dot B^{s+2}_{p, 1})}\leq C\left(\|u_{0}\|_{\dot B^{s}_{p, 1}} + \|f\|_{{L}^{1}_{T}(\dot B^{s}_{p, 1})}\right).
	\end{equation}
\end{lemma}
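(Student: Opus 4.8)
The plan is to reduce the estimate to frequency-localized computations via the Littlewood--Paley decomposition and then to exploit the exponential decay of the heat flow on functions whose Fourier support lies in an annulus, as provided by Lemma \ref{semi}. Since the blocks $\dot\Delta_j$ commute with both $\partial_t$ and $\Delta$, applying $\dot\Delta_j$ to \eqref{5638295629} shows that $\dot\Delta_j u$ solves the same heat equation with data $\dot\Delta_j u_0$ and source $\dot\Delta_j f$, so Duhamel's formula \eqref{2.1} gives
$$\dot\Delta_j u(t)=e^{\mu t\Delta}\dot\Delta_j u_0+\int_0^t e^{\mu(t-s)\Delta}\dot\Delta_j f(s)\,ds.$$
Because $\dot\Delta_j u$ has Fourier support in $2^j\mathcal C$, Lemma \ref{semi} yields the decay estimate $\|e^{\mu t\Delta}\dot\Delta_j g\|_{L^p}\lesssim e^{-c\mu 2^{2j}t}\|\dot\Delta_j g\|_{L^p}$, whence, for every $t\in[0,T]$,
$$\|\dot\Delta_j u(t)\|_{L^p}\lesssim e^{-c\mu 2^{2j}t}\|\dot\Delta_j u_0\|_{L^p}+\int_0^t e^{-c\mu 2^{2j}(t-s)}\|\dot\Delta_j f(s)\|_{L^p}\,ds.$$

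From this pointwise-in-time inequality I would extract the two pieces of \eqref{2.2} separately. For the $L^\infty_T(\dot B^s_{p,1})$ bound I take the supremum over $t$, estimate each exponential factor by $1$ and the convolution integral by $\|\dot\Delta_j f\|_{L^1_T(L^p)}$, then multiply by $2^{js}$ and sum the resulting $\ell^1(\mathbb Z)$ series to obtain $\|u\|_{L^\infty_T(\dot B^s_{p,1})}\lesssim\|u_0\|_{\dot B^s_{p,1}}+\|f\|_{L^1_T(\dot B^s_{p,1})}$. For the smoothing bound $\mu\|u\|_{L^1_T(\dot B^{s+2}_{p,1})}$ the gain of two derivatives comes precisely from the factor $2^{2j}$: integrating in time gives $2^{2j}\int_0^T e^{-c\mu 2^{2j}t}\,dt\le (c\mu)^{-1}$ for the homogeneous part, while Young's convolution inequality in the time variable gives $\|2^{2j}e^{-c\mu 2^{2j}\cdot}\|_{L^1(0,T)}\le(c\mu)^{-1}$ for the Duhamel part; multiplying by $2^{js}$ and summing in $\ell^1(\mathbb Z)$ then produces the factor $\mu^{-1}$ together with the asserted right-hand side. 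Adding the two estimates yields \eqref{2.2}.

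Uniqueness is immediate from linearity: the difference of two solutions with the stated regularity solves \eqref{5638295629} with $u_0=0$ and $f=0$, so by the uniqueness of tempered-distribution solutions recorded just before the lemma it vanishes; alternatively \eqref{2.2} applied to the difference gives it directly. For the continuity statement $u\in\mathcal C([0,T];\dot B^s_{p,1})$ I would argue blockwise: each $\dot\Delta_j u$ is continuous in time with values in $L^p$ by the explicit formula above, the partial sums $\sum_{|j|\le N}\dot\Delta_j u$ are therefore continuous into $\dot B^s_{p,1}$, and the bound just proved shows they converge uniformly on $[0,T]$ in $\dot B^s_{p,1}$; the $\ell^1$ summability of the dyadic norms is exactly what upgrades pointwise continuity of the blocks to continuity of the full series. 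I expect the one genuinely delicate point to be this last step, namely securing honest continuity up to $t=0$ in the critical homogeneous $\dot B^s_{p,1}$ framework, which relies on the uniform control of the high- and low-frequency tails that the summation index $r=1$ furnishes; the two quantitative inequalities in \eqref{2.2} are otherwise routine consequences of Lemma \ref{semi} and Young's inequality.
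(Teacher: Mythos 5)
Your proof is correct and is exactly the standard argument: the paper does not prove this lemma at all, but merely cites it as a classical result of Chemin \cite{MR1145160} (see also \cite{MR2768550}), and the expected proof is precisely the one you give --- Littlewood--Paley localization, Duhamel's formula, the exponential decay of the heat semigroup on dyadic blocks, and Young's inequality in time for the smoothing part, with the $r=1$ summability handling continuity up to $t=0$. The only point worth flagging is that Lemma \ref{semi} as printed in the paper carries the factor $e^{c\lambda^2 t}$ with a positive exponent, which is a typo; you correctly use the decaying version $e^{-c\lambda^2 t}$, without which the whole estimate would collapse.
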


\section{The proof of Theorem \ref{th}}\label{se}

In this section, we shall  give the proof of Theorem \ref{th}. We only look at the case $p<q,$ since the case $p=q$ is been shown in \cite{2019arXiv191103246D}.

By means of the ${\bf{Leray~projector}}$  $\mathcal{P}:=\rm{Id}-\nabla(-\Delta)^{-1}{\rm{div}}$ and the fact that $u,\,b$ are divergence free vector fields, we can rewrite the system \eqref{main1} as (see \cite{2019arXiv191103246D}):
\begin{equation}\label{eq44}
\left\{
\begin{aligned}
&\partial_t u-\mu\Delta u=Q_{a}(b, b)-Q_{a}(u, u),\\
&\partial_t b-\nu\Delta b=Q_{b}(u-  \eps J, b),\\
&\partial_t J-\nu\Delta J=\nabla\times Q_{b}(u- \eps J, {\rm{curl}^{-1}}J),\\
\end{aligned}
\right.
\end{equation}
 and supplemented with divergence free initial data
\begin{equation}\label{ini44}
 (\mathnormal u(0,\mathnormal x), \mathnormal b(0,\mathnormal x), \mathnormal J(0,\mathnormal x))=(\mathnormal u_{0}, \mathnormal b_{0}, J_{0}).
\end{equation}
Where bi-linear forms
\begin{equation*}
\begin{split}
Q_{a}(v, w){:=}&\,\frac{1}{2}\mathcal{P}({\rm{div}}(v\otimes w)+{\rm{div}}(w\otimes v)),    \\
Q_{b}(v, w){:=}&\,{\rm{div}}(v\otimes w)-{\rm{div}}(w\otimes v),
\end{split}
\end{equation*}
and 
$$\Bigl(\div (v\otimes w)\Bigr)^j:=\sum_{k=1}^3\partial_{k}(v^jw^k).$$

Define free solution
\begin{equation*}
u_L:=e^{\mu t\Delta} u_0, ~  b_L:=e^{\nu t\Delta}b_0,  ~ J_L:=e^{\nu t\Delta} J_0.  
\end{equation*}
Let $u_0\in\dot{B}^{\frac{3}{p}-1}_{p, 1}$ and $b_0,\,J_0\in\dot{B}^{\frac{3}{q}-1}_{q, 1},$ it is easy to find  that by Lemma \ref{Le_27} that
\begin{equation*}
\begin{split}
 (u_L, b_L, J_L)\in E_p\times E_q\times E_q
 \end{split}
\end{equation*}
and there holds
\begin{equation}\label{1234}
\begin{split}
 \|u_L\|_{{L}^{\infty}(\dot B^{\frac{3}{p}-1}_{p, 1}) } + \mu\|u_L\|_{  {L}^{1}(\dot B^{\frac{3}{p}+1}_{p, 1})} \leq C   \|u_0\|_{ \dot B^{\frac{3}{p}-1}_{p, 1}  },
\end{split}
\end{equation}
\begin{equation}\label{1235}
\begin{split}
\|b_L\|_{{L}^{\infty}(\dot B^{\frac{3}{q}-1}_{q, 1}) } + \nu\|b_L\|_{  {L}^{1}(\dot B^{\frac{3}{q}+1}_{q, 1})} \leq C   \|b_0\|_{ \dot B^{\frac{3}{q}-1}_{q, 1}  },
\end{split}
\end{equation}
\begin{equation}\label{1236}
\begin{split}
\|J_L\|_{{L}^{\infty}(\dot B^{\frac{3}{q}-1}_{q, 1}) } + \nu\|J_L\|_{ {L}^{1}(\dot B^{\frac{3}{q}+1}_{q, 1})} \leq C   \|J_0\|_{ \dot B^{\frac{3}{q}-1}_{q, 1}  }.
\end{split}
\end{equation}

Define  $(\bar u, \bar b,\bar J) := (u-u^{L}, b-b^{L}, J-J^{L})$. Then $(u, b, J)$ is a solution of \eqref{eq44} if and only if $(\bar u, \bar b, \bar J)$ satisfies the following system:
 \begin{equation}\label{eq4}
\left\{
\begin{aligned}
& \partial_t\bar u-\mu\Delta\bar u=Q_{a}(\bar b, \bar b)+Q_{a}(b_L, \bar b)+Q_{a}(\bar b, b_L)+Q_{a}(b_L, b_L)\\
&\hspace*{3cm}-Q_{a}(\bar u, \bar u)-Q_{a}(u_L, \bar u)-Q_{a}(\bar u, u_L)-Q_{a}(u_L, u_L),\\
& \partial_t\bar b-\nu\Delta\bar b=Q_{b}(\bar u-\eps\bar J, \bar b)+Q_{b}(u_L-\eps J_L, \bar b)+Q_{b}(\bar u-\eps\bar J, b_L)\\
&\hspace*{7.5cm}+Q_{b}(u_L-\eps J_L, b_L),\\
& \partial_t\bar J-\nu\Delta\bar J=
\nabla\times\Bigl(Q_{b}(\bar u-\eps\bar J, {\rm{curl}^{-1}}\bar J)+Q_{b}(u_L-\eps J_L, {\rm{curl}^{-1}}\bar J)\\
&\hspace*{3cm}+Q_{b}(\bar u-\eps\bar J, {\rm{curl}^{-1}}J_L)
+Q_{b}(u_L-\eps J_L, {\rm{curl}^{-1}}J_L).
\end{aligned}
\right.
\end{equation}

In what follows, we will employing the iterative method to prove that there exists an unique solution $(\bar u, \bar b, \bar J)$ to the system \eqref{eq4}.  More precisely, the iterating approximate system is constructed as follows:
 \begin{equation}\label{iter}
 \left\{
 \begin{aligned}
 &\partial_t\bar u_n-\mu\Delta\bar u_n=Q_{a}(\bar b_{n-1}, \bar b_{n-1})+Q_{a}(b_L, \bar b_{n-1})+Q_{a}(\bar b_{n-1}, b_L)+Q_{a}(b_L, b_L)\\
&\hspace*{2.25cm}-Q_{a}(\bar u_{n-1}, \bar u_{n-1})-Q_{a}(u_L, \bar u_{n-1})-Q_{a}(\bar u_{n-1}, u_L)-Q_{a}(u_L, u_L),\\
&\partial_t\bar b_n-\mu\Delta\bar b_n=Q_{b}(\bar u_{n-1}-\eps\bar J_{n-1}, \bar b_{n-1})+Q_{b}(u_L-\eps J_L, \bar b_{n-1})\\
&\hspace*{3cm}+Q_{b}(\bar u_{n-1}-\eps\bar J_{n-1}, b_L)+Q_{b}(u_L-\eps J_L, b_L),\\
& \partial_t\bar J_n-\mu\Delta\bar J_n
 =  Q_{b}(\bar u_{n-1}-\eps\bar J_{n-1}, {\rm{curl}^{-1}}\bar J_{n-1})+Q_{b}(u_L-\eps J_L, {\rm{curl}^{-1}}\bar J_{n-1})\\
&\hspace*{3cm}+Q_{b}(\bar u_{n-1}-\eps\bar J_{n-1}, {\rm{curl}^{-1}}J_L)
+Q_{b}(u_L-\eps J_L, {\rm{curl}^{-1}}J_L).
 \end{aligned}
 \right.
 \end{equation}
We start the approximate system with 
\begin{equation*}
(\bar u_0, \bar b_0, \bar J_0)(t, x)=(0, 0, 0)
\end{equation*}
for all $t\geq 0,$
and assume the initial data of the iterative approximate system \eqref{iter} satisfied for all $n\in \mathbb{N}$
\begin{equation}\label{iteri}
 (\bar u_n, \bar b_n, \bar J_n)(0, x)=(0, 0, 0). 
\end{equation}
In the arguments proving the convergence $(n \rightarrow \infty)$ of the iterative approximate solutions of  \eqref{iter}-\eqref{iteri}, it is essential to obtain uniform estimates for it.
\subsection{Uniform boundedness of $(\bar{u}_n, \bar{b}_n, \bar{J}_n)$}\label{5632659257}
 We claim that there exists a positive constant $M$ such that for all $n \in \mathbb{N},$
\begin{equation}\label{767677767}
\begin{split}
&  \left(\|\bar u_n\|_{{L}^{\infty}(\dot B^{\frac{3}{p}-1}_{p, 1}) } + \mu\|\bar u_n\|_{ {L}^{1}(\dot B^{\frac{3}{p}+1}_{p, 1})}  \right)  \\
 &\quad\quad\quad+  \left(\| \bar b_n \|_{{L}^{\infty}(\dot B^{\frac{3}{q}-1}_{q, 1}) } + \nu\| \bar b_n \|_{ {L}^{1}(\dot B^{\frac{3}{q}+1}_{q, 1})}  \right) \\
 &\qquad\quad\quad\quad\quad+   \left(\|  \bar J_n \|_{{L}^{\infty}(\dot B^{\frac{3}{q}-1}_{q, 1}) } + \nu\|  \bar J_n \|_{ {L}^{1}(\dot B^{\frac{3}{q}+1}_{q, 1})}  \right)  \leq M.
\end{split}
\end{equation}

Obviously,  \eqref{767677767} is satisfied when $n=0$. Assume the claim \eqref{767677767} holds true for $n-1$, i.e., 
\begin{equation*}
\begin{split}
&  \|\bar u_{n-1}\|_{{L}^{\infty}(\dot B^{\frac{3}{p}-1}_{p, 1}) } + \mu\|\bar u_{n-1}\|_{  {L}^{1}(\dot B^{\frac{3}{p}+1}_{p, 1})}   
+  \| \bar b_{n-1} \|_{{L}^{\infty}(\dot B^{\frac{3}{q}-1}_{q, 1}) }\\
&\hspace*{1cm} + \nu\| \bar b_{n-1} \|_{  {L}^{1}(\dot B^{\frac{3}{q}+1}_{q, 1})}+   \|  \bar J_{n-1} \|_{{L}^{\infty}(\dot B^{\frac{3}{q}-1}_{q, 1}) } + \nu\|  \bar J_{n-1} \|_{  {L}^{1}(\dot B^{\frac{3}{q}+1}_{q, 1})}    \leq M.
\end{split}
\end{equation*}
 With smallness {condition} \eqref{small}, we now devote to the proof of \eqref{767677767}  through finding some suitable $M.$ Firstly, we need to state the following product laws for quadratic terms $Q_a, Q_b$, it will play a significant  role in the later parts.

If \eqref{condition1} is assumed, by using Lemma \ref{law1}, Lemma \ref{law2},
direct calculation  tells us that:
\begin{equation}\label{law00}
\begin{split}
 \|Q_a(v, w)\|_{\dot B^{\frac{3}{p}-1}_{p, 1} } 
 \lesssim& \|v\otimes w\|_{\dot B^{\frac{3}{p}}_{p, 1} } \\
  \lesssim& \|v\|_{\dot B^{\frac{3}{p}}_{p, 1} }\|w\|_{\dot B^{\frac{3}{p}}_{p, 1} },
  \end{split}
\end{equation}
and
\begin{equation}\label{law11}
\begin{split}
\|Q_b(v, w)\|_{\dot B^{\frac{3}{q}-1}_{q, 1} } 
 \lesssim& \|v\otimes w\|_{\dot B^{\frac{3}{q}}_{q, 1} } \\
  \lesssim& \|v\|_{\dot B^{\frac{3}{q}}_{q, 1} }\|w\|_{\dot B^{\frac{3}{q}}_{q, 1} },
\end{split}
\end{equation}
and
\begin{equation}\label{law22}
\begin{split}
\|\nabla\times Q_b(v, {\rm{curl^{-1}}}w)\|_{\dot B^{\frac{3}{q}-1}_{q, 1} } 
 \lesssim& \|Q_b(v, {\rm{curl^{-1}}}w)\|_{\dot B^{\frac{3}{q}}_{q, 1} } \\
  \lesssim& \|v\cdot\nabla ({\rm{curl^{-1}}}w)\|_{\dot B^{\frac{3}{q}}_{q, 1} }+\|({\rm{curl^{-1}}}w)\cdot\nabla v\|_{\dot B^{\frac{3}{q}}_{q, 1} }\\
  \lesssim&\|v\|_{\dot B^{\frac{3}{q}}_{q, 1}}\|w\|_{\dot B^{\frac{3}{q}}_{q, 1}}+\|v\|_{\dot B^{\frac{3}{q}+1}_{q, 1}}\|w\|_{\dot B^{\frac{3}{q}-1}_{q, 1}},
\end{split}
\end{equation}
and
\begin{equation}\label{law33}
\begin{split}
\|\nabla\times Q_b(v, {\rm{curl^{-1}}}w)\|_{\dot B^{\frac{3}{q}-1}_{q, 1} } 
 \lesssim& \|Q_b(v, {\rm{curl^{-1}}}w)\|_{\dot B^{\frac{3}{q}}_{q, 1} } \\
  \lesssim& \|v\cdot\nabla ({\rm{curl^{-1}}}w)\|_{\dot B^{\frac{3}{q}}_{q, 1} }+\|({\rm{curl^{-1}}}w)\cdot\nabla v\|_{\dot B^{\frac{3}{q}}_{q, 1} }\\
  \lesssim&\|v\|_{\dot B^{\frac{3}{p}}_{p, 1}}\|w\|_{\dot B^{\frac{3}{q}}_{q, 1}}+\|v\|_{\dot B^{\frac{3}{p}+1}_{p, 1}}\|w\|_{\dot B^{\frac{3}{q}-1}_{q, 1}}.
\end{split}
\end{equation}
Thanks to \eqref{law_0} in Lemma \ref{law0} with $\theta =\frac{3}{p}-\frac{3}{q}\leq1,$ we have
\begin{equation}\label{law44}
\begin{split}
 \|Q_a(v, w)\|_{\dot B^{\frac{3}{p}-1}_{p, 1} } 
 \lesssim& \|v\otimes w\|_{\dot B^{\frac{3}{p}}_{p, 1} } \\
  \lesssim& \|v\|_{\dot B^{\frac{6}{q}-\frac{3}{p}}_{q, 1} }\|w\|_{\dot B^{\frac{3}{p}}_{q, 1} }+\|w\|_{\dot B^{\frac{6}{q}-\frac{3}{p}}_{q, 1} }\|v\|_{\dot B^{\frac{3}{p}}_{q, 1} }.
  \end{split}
\end{equation}
Notice that  from Lemma \ref{Le_27},  there exists a constant $C$ such that
\begin{equation}\label{111}
\|u_L\|_{ {L}^{1}(\dot B^{\frac{3}{p}+1 }_{p, 1})}+ \|u_L\|_{{L}^{2}(\dot B^{\frac{3}{p} }_{p, 1}) }\leq C\eps_0(1+\frac{1}{\mu}),
\end{equation}
\begin{equation}\label{112}
\begin{split}
&\| (b_L, J_L) \|_{ {L}^{1}(\dot B^{\frac{3}{q}+1 }_{q, 1}) }+\| (b_L, J_L) \|_{ {L}^{2}(\dot B^{\frac{3}{q} }_{q, 1}) } \\
&\qquad+\|b_L  \|_{ {L}^{\frac{2pq}{pq-3q+3p}}\dot B^{\frac{6}{q}-\frac{3}{p}} _{q, 1}) }+ \|b_L \|_{{L}^{\frac{2pq}{pq+3q-3p}}(\dot B^{\frac{3}{p}}_{q, 1})}\leq C\eps_0(1+\frac{1}{\nu}).
\end{split}
\end{equation}

 Combining \eqref{law00}, \eqref{law44}, take use of interpolation inequality in Proposition \ref{P_Besov} and H\"older inequality, remember the "norm" of free solution  $(u_L, b_L, J_L)$ is small thanks to \eqref{111} and \eqref{112}, it follows from Lemma \ref{Le_27} that
\begin{equation*}
\begin{split}
 &  \|\bar u_n\|_{{L}^{\infty}(\dot B^{\frac{3}{p}-1}_{p, 1}) } + \mu\|\bar u_n\|_{  {L}^{1}(\dot B^{\frac{3}{p}+1}_{p, 1})}    \\
 \leq & C\|Q_{a}(\bar b_{n-1}, \bar b_{n-1})+Q_{a}(b_L, \bar b_{n-1})+Q_{a}(\bar b_{n-1}, b_L)+Q_{a}(b_L, b_L)\|_{{L}^{1}(\dot B^{\frac{3}{p}-1}_{p, 1})}\\
 &\quad+C\|Q_{a}(\bar u_{n-1}, \bar u_{n-1})+Q_{a}(u_L, \bar u_{n-1})+Q_{a}(\bar u_{n-1}, u_L)+Q_{a}(u_L, u_L)\|_{L^{1}(\dot B^{\frac{3}{p}-1}_{p, 1})}\\
\leq& C\int_0^\infty\Bigl(\|\bar b_{n-1}\|_{\dot B^{\frac{6}{q}-\frac{3}{p}}_{q, 1} }\|\bar b_{n-1}\|_{\dot B^{\frac{3}{p}}_{q, 1} }+\| b_L\|_{\dot B^{\frac{6}{q}-\frac{3}{p}}_{q, 1} }\|\bar b_{n-1}\|_{\dot B^{\frac{3}{p}}_{q, 1}}++\| b_L\|_{\dot B^{\frac{3}{p}}_{q, 1} }\|\bar b_{n-1}\|_{\dot B^{\frac6q-\frac{3}{p}}_{q, 1}}\\
&\quad
+\|b_L\|_{\dot B^{\frac{6}{q} -\frac3p}_{q, 1} }\|b_L\|_{\dot B^{\frac3p}_{q, 1} } +\|\bar u_{n-1}\|_{\dot B^{ \frac{3}{p}}_{p, 1} }\|\bar u_{n-1}\|_{\dot B^{\frac{3}{p}}_{p, 1} }+2\| u_L\|_{\dot B^{ \frac{3}{p}}_{p, 1} }\|\bar u_{n-1}\|_{\dot B^{\frac{3}{p}}_{p, 1}}\\
&\quad 
+\|u_L\|_{\dot B^{ \frac{3}{p}}_{p, 1} }\|u_L\|_{\dot B^{\frac{3}{p}}_{p, 1} } \Bigr)(t)\,dt\\
\leq& C\Bigl(M^2+ \eps_0 M+\eps_0^2\Bigr)\Bigl(1+\frac{1}{\nu}+\frac{1}{\mu}\Bigr)^2,
\end{split}
\end{equation*}
where we have used the facts 
\begin{equation*}
\begin{split}
 &\int_0^\infty\| b_L\|_{\dot B^{\frac{6}{q}-\frac{3}{p}}_{q, 1} }\|\bar b_{n-1}\|_{\dot B^{\frac{3}{p}}_{q, 1}}(t)\mathrm{d}t \\
 \leq &C\| b_L\|_{L^{\frac{2pq}{pq-3q+3p}}(\dot B^{\frac{6}{q}-\frac{3}{p}}_{q, 1} )} \|\bar b_{n-1}\|_{L^{\frac{2pq}{pq+3q-3p}}(\dot B^{\frac{3}{p}}_{q, 1})},
\end{split}
\end{equation*}
and
\begin{equation*}
\begin{split}
&\int_0^\infty\|\bar b_{n-1}\|_{\dot B^{\frac{6}{q}-\frac{3}{p}}_{q, 1} }\|b_L\|_{\dot B^{\frac{3}{p}}_{q, 1} }(t)\mathrm{d}t \\
\leq &C \|\bar b_{n-1}\|_{L^{\frac{2pq}{pq-3q+3p}}(\dot B^{\frac{3}{q}+1}_{q, 1})}\| b_L\|_{L^{\frac{2pq}{pq+3q-3p}}(\dot B^{ \frac{3}{p}}_{q, 1} )}.
\end{split}
\end{equation*}
 Again, taking advantage of \eqref{law00}, \eqref{law11}, interpolation and H\"older inequality, it follows from Lemma \ref{Le_27} that
\begin{equation*}
\begin{split}
 &  \|\bar b_n\|_{{L}^{\infty}(\dot B^{\frac{3}{q}-1}_{q, 1}) } + \nu\|\bar b_n\|_{  {L}^{1}(\dot B^{\frac{3}{q}+1}_{q, 1})}    \\
 \leq & C\|Q_{b}(\bar u_{n-1}-\eps\bar J_{n-1}, \bar b_{n-1})+Q_{b}(u_L-\eps J_L, \bar b_{n-1})\|_{{L}^{1}(\dot B^{\frac{3}{q}-1}_{q, 1})}\\
 &\quad+C\| Q_{b}(\bar u_{n-1}-\eps\bar J_{n-1}, b_L) +Q_{b}(u_L-\eps J_L, b_L)\|_{L^{1}(\dot B^{\frac{3}{q}-1}_{q, 1})}\\
 \leq&C \int_0^\infty\Bigl(    \|\bar b_{n-1}\|_{\dot B^{ \frac{3}{q}}_{q, 1} }\|\bar u_{n-1}\|_{\dot B^{\frac{3}{p}}_{p, 1} }+ \|\bar b_{n-1}\|_{\dot B^{ \frac{3}{q}}_{q, 1} }\|\bar J_{n-1}\|_{\dot B^{\frac{3}{q}}_{q, 1} }+\| u_L\|_{\dot B^{ \frac{3}{p}}_{p, 1} }\|\bar b_{n-1}\|_{\dot B^{\frac{3}{q}}_{q, 1}}\\
 &\quad+\|\bar b_{n-1}\|_{\dot B^{ \frac{3}{q}}_{q, 1} }\|J_L\|_{\dot B^{\frac{3}{q}}_{q, 1} } +\|\bar u_{n-1}\|_{\dot B^{ \frac{3}{p}}_{p, 1} }\|b_L\|_{\dot B^{\frac{3}{q}}_{q, 1} } +\|\bar J_{n-1}\|_{\dot B^{ \frac{3}{q}}_{q, 1} }\|b_L\|_{\dot B^{\frac{3}{q}}_{q, 1} }  \\
 &\quad+\|  u_L\|_{\dot B^{ \frac{3}{p}}_{p, 1} }\|b_L\|_{\dot B^{\frac{3}{q}}_{q, 1} }   +\|  J_L\|_{\dot B^{ \frac{3}{q}}_{q, 1} }\|b_L\|_{\dot B^{\frac{3}{q}}_{q, 1} }  \Bigr)(t)\mathrm{d}t\\
\leq& C\Bigl(M^2+ \eps_0 M+\eps_0^2\Bigr)\Bigl(1+\frac{1}{\nu}+\frac{1}{\mu}\Bigr)^2,
\end{split}
\end{equation*}
and
\begin{equation*}
\begin{split}
 &  \|\bar J_n\|_{{L}^{\infty}(\dot B^{\frac{3}{p}-1}_{p, 1}) } + \nu\|\bar J_n\|_{  {L}^{1}(\dot B^{\frac{3}{p}+1}_{p, 1})}    \\
 \leq & C\|Q_{b}(\bar u_{n-1}-\eps\bar J_{n-1}, {\rm{curl}^{-1}}\bar J_{n-1})+Q_{b}(u_L-\eps J_L, {\rm{curl}^{-1}}\bar J_{n-1}) \|_{{L}^{1}(\dot B^{\frac{3}{q}}_{q, 1})}\\
 &\quad+C\|  Q_{b}(\bar u_{n-1}-\eps\bar J_{n-1}, {\rm{curl}^{-1}}J_L)
 +Q_{b}(u_L-\eps J_L, {\rm{curl}^{-1}}J_L)\|_{L^{1}(\dot B^{\frac{3}{q}}_{q, 1})}\\
 \leq&C \int_0^\infty\Bigl(    \|\bar J_{n-1}\|_{\dot B^{ \frac{3}{q}}_{q, 1} }\|\bar J_{n-1}\|_{\dot B^{\frac{3}{q}}_{q, 1} }+ \|\bar u_{n-1}\|_{\dot B^{ \frac{3}{p}}_{p, 1} }\|\bar J_{n-1}\|_{\dot B^{\frac{3}{q}}_{q, 1} }+ \|\bar J_{n-1}\|_{\dot B^{ \frac{3}{q}+1}_{q, 1} }\|\bar J_{n-1}\|_{\dot B^{\frac{3}{q}-1}_{q, 1} }\\
 &\qquad+ \|\bar u_{n-1}\|_{\dot B^{ \frac{3}{p}+1}_{p, 1} }\|\bar J_{n-1}\|_{\dot B^{\frac{3}{q}-1}_{q, 1} }+\| u_L\|_{\dot B^{ \frac{3}{p}}_{p, 1} }\|\bar J_{n-1}\|_{\dot B^{\frac{3}{q}}_{q, 1}}+\|u_L\|_{\dot B^{ \frac{3}{p}}_{p, 1} }\|J_L\|_{\dot B^{\frac{3}{q}}_{q, 1} }\\
 &\qquad +\|\bar J_{n-1}\|_{\dot B^{ \frac{3}{q}-1}_{q, 1} }\|u_L\|_{\dot B^{\frac{3}{p}+1}_{p, 1} } +\|\bar J_{n-1}\|_{\dot B^{ \frac{3}{q}-1}_{q, 1} }\|J_L\|_{\dot B^{\frac{3}{q}+1}_{q, 1} } +\|  \bar u_{n-1}\|_{\dot B^{ \frac{3}{p}}_{p, 1} }\|J_L\|_{\dot B^{\frac{3}{q}}_{q, 1} } \\
 &\qquad+\|\bar J_{n-1}\|_{\dot B^{ \frac{3}{q} }_{q, 1} }\|J_L\|_{\dot B^{\frac{3}{q} }_{q, 1} }  +\|\bar J_{n-1}\|_{\dot B^{ \frac{3}{q}+1}_{q, 1} }\|J_L\|_{\dot B^{\frac{3}{q}-1}_{q, 1} }+\|\bar u_{n-1}\|_{\dot B^{ \frac{3}{p}+1}_{p, 1} }\|J_L\|_{\dot B^{\frac{3}{q}-1}_{q, 1} } \\
 &\qquad  +\|  J_L\|_{\dot B^{ \frac{3}{q}}_{q, 1} }\|J_L\|_{\dot B^{\frac{3}{q}}_{q, 1} } +\|  J_L\|_{\dot B^{ \frac{3}{q}}_{q, 1} }\|u_L\|_{\dot B^{\frac{3}{p}}_{p, 1} }+\|  J_L\|_{\dot B^{ \frac{3}{q}-1}_{q, 1} }\|J_L\|_{\dot B^{\frac{3}{q}+1}_{q, 1} } \\
 &\qquad +\|  u_L\|_{\dot B^{ \frac{3}{p}+1}_{p, 1} }\|J_L\|_{\dot B^{\frac{3}{q}-1}_{q, 1} }     \Bigr)(t)\mathrm{d}t\\
 \leq& C\Bigl(M^2+ \eps_0 M+\eps_0^2\Bigr)\Bigl(1+\frac{1}{\nu}+\frac{1}{\mu}\Bigr)^2.
\end{split}
\end{equation*}
By choosing $\eps_0, M$ sufficiently small such that 
\begin{equation}\label{Tsmall}
\begin{split}
 &M \leq \frac{ 1}{9C\left( 1+\frac{1}{\mu}+\frac{  1}{\nu}\right)^2 }, \qquad \varepsilon_0  \leq \frac{ 1}{9C\left(  1+\frac{1}{\mu}+\frac{  1}{\nu}\right)^2 },
\end{split}
\end{equation}
then one find that
\begin{equation*} 
\begin{split}
&  \|\bar u_{n }\|_{{L}^{\infty}_T(\dot B^{\frac{3}{p}-1}_{p, 1}) } + \mu\|\bar u_{n }\|_{  {L}^{1}_T(\dot B^{\frac{3}{p}+1}_{p, 1})}   \\
& \quad\quad\quad\quad+  \| \bar b_{n } \|_{{L}^{\infty}_T(\dot B^{\frac{3}{q}-1}_{q, 1}) } + \nu\| \bar b_{n } \|_{  {L}^{1}_T(\dot B^{\frac{3}{q}+1}_{q, 1})} \\
& \quad\quad\quad\quad\quad\quad\quad\quad+   \|  \bar J_{n } \|_{{L}^{\infty}_T(\dot B^{\frac{3}{q}-1}_{q, 1}) } + \nu\|  \bar J_{n } \|_{  {L}^{1}_T(\dot B^{\frac{3}{q}+1}_{q, 1})}    \leq M.
\end{split}
\end{equation*}
Arguing by induction, we conclude that \eqref{767677767}  holds true for all $n \in \mathbb{N}.$ 

Once the uniform bounds is established for $(\bar{u}_n, \bar{b}_n, \bar{J}_n)$, we shall use compactness arguments to prove convergence. 
\subsection{Convergence of $(\bar{u}_n, \bar{b}_n, \bar{J}_n )$}\label{sec2} 

We claim that $(\bar{u}_n, \bar{b}_n, \bar{J}_n )_{n\in\mathbb{N}}$ is a Cauchy sequence in $$  \Bigl({L}^{\infty}(\dot B^{\frac{3}{p}-1 }_{p, 1})\cap {L}^{1}(\dot B^{\frac{3}{p}+1 }_{p, 1})\Bigr)\times\Bigl({L}^{\infty}(\dot B^{\frac{3}{q}-1 }_{q, 1})\cap {L}^{1}(\dot B^{\frac{3}{q}+1 }_{q, 1})\Bigr)\times\Bigl({L}^{\infty}(\dot B^{\frac{3}{q}-1 }_{q, 1})\cap {L}^{1}(\dot B^{\frac{3}{q}+1 }_{q, 1})\Bigr).$$ For all $n\in\mathbb{N},$ let us consider the difference   $$(\delta\bar u^n, \delta\bar b^n, \delta\bar J^n):=(\bar{u}_{n+1}, \bar{b}_{n+1}, \bar{J}_{n+1})-(\bar{u}_{n}, \bar{b}_{n}, \bar{J}_{n} ),$$
and define
\begin{align*}
& \delta^n:=\|\delta\bar u^{n }\|_{{L}^{\infty}(\dot B^{\frac{3}{p}-1}_{p, 1}) } + \mu\|\delta\bar u^{n }\|_{  {L}^{1}(\dot B^{\frac{3}{p}+1}_{p, 1})} \\
&\quad\quad\quad+ \|\delta\bar b^{n }\|_{{L}^{\infty}(\dot B^{\frac{3}{q}-1}_{q, 1}) } + \nu\|\delta\bar u^{n }\|_{  {L}^{1}(\dot B^{\frac{3}{q}+1}_{q, 1})} \\
&\quad\quad\quad+\|\delta\bar J^{n }\|_{{L}^{\infty}(\dot B^{\frac{3}{q}-1}_{q, 1}) } + \nu\|\delta\bar J^{n }\|_{  {L}^{1}(\dot B^{\frac{3}{q}+1}_{q, 1})}.
 \end{align*}

Then thanks to Lemma \ref{Le_27}, we only need to estimate the following terms
\begin{align*}
I_1^n:=&\|Q_{a}(\delta\bar b^{n -1}, \bar b_{n})\|_{{L}^{1}(\dot B^{\frac{3}{p}-1}_{p, 1}) }+\|Q_{a}(\bar b_{n-1 }, \delta\bar b^{n -1})\|_{{L}^{1}(\dot B^{\frac{3}{p}-1}_{p, 1}) }\\
&+\|Q_{a}(b_L, \delta\bar b^{n -1})\|_{{L}^{1}(\dot B^{\frac{3}{p}-1}_{p, 1}) }+\|Q_{a}(\delta\bar b^{n -1}, b_L)\|_{{L}^{1}(\dot B^{\frac{3}{p}-1}_{p, 1}) }\\
&+\|Q_{a}(\delta\bar u^{n -1}, \bar u_{n})\|_{{L}^{1}(\dot B^{\frac{3}{p}-1}_{p, 1}) }+\|Q_{a}(\bar u_{n-1 }, \delta\bar u^{n -1})\|_{{L}^{1}(\dot B^{\frac{3}{p}-1}_{p, 1}) }\\
&+\|Q_{a}(u_L, \delta\bar u^{n -1})\|_{{L}^{1}(\dot B^{\frac{3}{p}-1}_{p, 1}) }+\|Q_{a}(\delta\bar u^{n -1}, u_L)\|_{{L}^{1}(\dot B^{\frac{3}{p}-1}_{p, 1}) },
\end{align*}

\begin{align*}
I_2^n:=&\|Q_{b}(\delta\bar u^{n -1}-\eps\delta\bar J^{n -1} , \bar b_{n})\|_{{L}^{1}(\dot B^{\frac{3}{q}-1}_{q, 1}) }+\|Q_{b}(\bar u_{n-1 }-\eps \bar J^{n -1}, \delta\bar b^{n -1})\|_{{L}^{1}(\dot B^{\frac{3}{q}-1}_{q, 1}) }\\
&+\|Q_{a}(u_L-\eps J_L, \delta\bar b^{n -1})\|_{{L}^{1}(\dot B^{\frac{3}{q}-1}_{q, 1}) }+\|Q_{a}(\delta\bar u^{n -1}-\eps\delta\bar J^{n -1}, b_L)\|_{{L}^{1}(\dot B^{\frac{3}{q}-1}_{q, 1}) },
\end{align*}

\begin{align*}
I_3^n:=&\|Q_{b}(\delta\bar u^{n -1}-\eps\delta\bar J^{n -1} , \curl^{-1}\bar J_{n})+Q_{b}(\bar u_{n-1 }-\eps \bar J^{n -1}, \curl^{-1}\delta\bar J^{n -1})\|_{{L}^{1}(\dot B^{\frac{3}{q}}_{q, 1}) }\\
&+\|Q_{a}(u_L-\eps J_L, \curl^{-1}\delta\bar J^{n -1})+Q_{a}(\delta\bar u^{n -1}-\eps\delta\bar J^{n -1}, \curl^{-1}J_L)\|_{{L}^{1}(\dot B^{\frac{3}{q}}_{q, 1}) }.
\end{align*}

Along extremely similar calculations as previous subsections,  thanks to \eqref{Tsmall} and uniform bounds \eqref{767677767}, one can get  
\begin{equation*}
\begin{split}
\delta^n&\leq C(I_1^n+I_2^n+I_3^n)\\
&\leq C\delta^{n-1}\Bigl(1+\frac1\mu+\frac1\nu\Bigr)^2\bigl(M+\eps_0\bigr)\\
&\leq \frac12 \delta^{n-1}.
\end{split}
\end{equation*}
Thus,  we know that   $(\bar{u}_n, \bar{b}_n, \bar{J}_n )_{n\in\mathbb{N}}$ is a Cauchy sequence in the space $$  \Bigl({L}^{\infty}(\dot B^{\frac{3}{p}-1 }_{p, 1})\cap {L}^{1}(\dot B^{\frac{3}{p}+1 }_{p, 1})\Bigr)\times\Bigl({L}^{\infty}(\dot B^{\frac{3}{q}-1}_{q, 1})  \cap {L}^{1}(\dot B^{\frac{3}{q}+1}_{q, 1})\Bigr) \times \Bigl({L}^{\infty}(\dot B^{\frac{3}{q}-1}_{q, 1})  \cap{L}^{1}(\dot B^{\frac{3}{q}+1}_{q, 1})\Bigr)$$
and there exists a triplet $(\bar{u}, \bar{b}, \bar{J})$ such that
\begin{equation*}
\begin{split}
\bar{u}_n \rightarrow \bar{u} \quad \text{in}& \quad{L}^{\infty}(\dot B^{\frac{3}{p}-1 }_{p, 1})\cap {L}^{1}(\dot B^{\frac{3}{p}+1 }_{p, 1}),\\
 \bar{b}_n \rightarrow \bar{b} \quad \text{in}& \quad {L}^{\infty}(\dot B^{\frac{3}{q}-1}_{q, 1})  \cap{L}^{1}(\dot B^{\frac{3}{q}+1}_{q, 1}),\\
 \bar{J}_n \rightarrow \bar{J} \quad \text{in}& \quad {L}^{\infty}(\dot B^{\frac{3}{q}-1}_{q, 1})  \cap{L}^{1}(\dot B^{\frac{3}{q}+1}_{q, 1}).
\end{split}
\end{equation*}
By the product laws that we used frequently before, it is not difficult to prove the convergence of non-linear terms in \eqref{iter}, by an example, we show that 
\begin{equation*}
\begin{split}
&\|Q_{b}(\bar u_{n}-\eps\bar J_{n}, {\rm{curl}^{-1}}\bar J_{n})-Q_{b}(\bar u-\eps\bar J, {\rm{curl}^{-1}}\bar J)\|_{{L}^{1}_T(\dot B^{\frac{3}{q}}_{q, 1})}\\
\leq&\|Q_{b}(\bar u_{n}-\bar u+\eps\bar J_{n}-\eps\bar J, {\rm{curl}^{-1}}\bar J_{n})\|_{{L}^{1}(\dot B^{\frac{3}{q}}_{q, 1})}\\
&\quad\quad+\|Q_{b}(\bar u-\eps\bar J, {\rm{curl}^{-1}}\bar J-{\rm{curl}^{-1}}\bar J_n)\|_{{L}^{1}(\dot B^{\frac{3}{q}}_{q, 1})}\\
\lesssim&\, M\Bigl(\|\bar u_{n}-\bar u\|_{{L}^{2}_T(\dot B^{\frac{3}{p}}_{p, 1})\cap {L}^{1}(\dot B^{\frac{3}{p}+1}_{p, 1})}+\|\eps\bar J_{n}-\eps\bar J\|_{{L}^{\infty}(\dot B^{\frac{3}{q}-1}_{q, 1})\cap {L}^{1}(\dot B^{\frac{3}{q}+1}_{q, 1})}\Bigr).
\end{split}
\end{equation*}

Hence,  we conclude that $(\bar{u} , \bar{b}, \bar{J})$ is indeed a solution of \eqref{eq44}. This implies that  $({u} ,{b}, J  )= (u_L+\bar{u}, b_L+\bar{b}, J_L+\bar{J})$ is a solution of \eqref{main1} in  $$  \Bigl({L}^{\infty}(\dot B^{\frac{3}{p}-1 }_{p, 1})\cap {L}^{1}(\dot B^{\frac{3}{p}+1 }_{p, 1})\Bigr)\times\Bigl({L}^{\infty}(\dot B^{\frac{3}{q}-1}_{q, 1})  \cap {L}^{1}(\dot B^{\frac{3}{q}+1}_{q, 1})\Bigr) \times \Bigl({L}^{\infty}(\dot B^{\frac{3}{q}-1}_{q, 1})  \cap{L}^{1}(\dot B^{\frac{3}{q}+1}_{q, 1})\Bigr)$$
and satisfies \eqref{1.1200}.

The continuity of $(u, B, J)$ is straightforward. Indeed, the right-hand sides  of \eqref{eq44} belong to $L^1_T(\dot{B}_{p, 1}^{\frac3p-1}),$ $L^1_T(\dot{B}_{q, 1}^{\frac3q-1}),$ $ L^1_T(\dot{B}_{q, 1}^{\frac3q-1})$ respectively.

\subsection{Uniqueness}
Let $(u_1, b_1, J_1)$ and $(u_2, b_2, J_2)$ be two solutions of \eqref{eq44} in $$E_p\times E_q\times E_q$$ with same initial data $(u_0, b_0, J_0)\in \dot{B}^{\frac{3}{p}-1}_{p, 1}\times \dot{B}^{\frac{3}{q}-1}_{q, 1}\times\dot{B}^{\frac{3}{q}-1}_{q, 1}.$ Without loss of generality, we assume  $(u_2, b_2, J_2)$ is the solution that constructed in the previous steps (in fact, one only needs $J_2$ is small in $E_q).$

Set $\delta u:= u_2-u_1$, $\delta b:= b_2-b_1$, and $\delta J:= J_2-J_1$,  we see that $(\delta u, \delta b, \delta J )$ satisfies
\begin{equation*}
\left\{
\begin{aligned}
&\partial_t \delta u -\mu\Delta  \delta u =Q_{a}( \delta b , b_1)+Q_{a}(  b_2 , \delta b  )-Q_{a}(\delta u , u_1 )-Q_{a}(u_2 , \delta u ),\\
&\partial_t  \delta b-\nu\Delta  \delta b =Q_{b}(u_1-\eps J_1, \delta b)+Q_{b}(\delta u-\eps\delta J, b_2),\\
&\partial_t  \delta J-\nu\Delta  \delta J =\nabla\times Q_{b}(u_1-\eps J_1, {\rm{curl}^{-1}}\delta J)+\nabla\times Q_{b}(\delta u-\eps\delta J, {\rm{curl}^{-1}} J_2),\\
&(  \delta u(0,  x),  \delta b(0,  x),   \delta J(0,  x))=(  0,  0, 0).
\end{aligned}
\right.
\end{equation*}

With our assumptions on two solutions,  one can verify that the right-hand sides of above system belong to $L^1(\dot{B}^{\frac{3}{p}-1}_{p, 1}),$ $L^1(\dot{B}^{\frac{3}{q}-1}_{q, 1}),$ $L^1(\dot{B}^{\frac{3}{q}-1}_{q, 1})$ respectively, thus by Lemma \ref{Le_27}, Lemma \ref{law0} (take $\theta=\frac3p-\frac3q$) and product laws \eqref{law00}-\eqref{law33}, one has
\begin{equation*}
\begin{split}
&\|\delta u (t)\|_{\dot B^{\frac3p-1}_{p,1}}+ \|(\delta b, \delta J)(t)\|_{\dot B^{\frac3q-1}_{q,1}}
+\int_0^t\Bigl(\|\delta u (t)\|_{\dot B^{\frac3q+1}_{p,1}}+ \|(\delta b, \delta J)(t)\|_{\dot B^{\frac3q+1}_{q,1}}\Bigr)\,d\tau\\
&\lesssim \int_0^t\Bigl(\|(b_1, b_2) \|_{\dot B^{\frac 3p}_{q,1}}\|\delta b\|_{\dot B^{\frac 6q-\frac3p}_{q,1}} +\|(b_1, b_2) \|_{\dot B^{\frac6q-\frac3p}_{q,1}}\|\delta b\|_{\dot B^{\frac3p}_{q,1}}+\|(u_1, u_2) \|_{\dot B^{\frac 3p}_{p,1}}\|\delta u\|_{\dot B^{\frac3p}_{p,1}}\\
&\quad\quad\quad+(\|u_1 \|_{\dot B^{\frac 3p}_{p,1}}+\|J_1\|_{\dot B^{\frac3q}_{q,1}})\|\delta b\|_{\dot B^{\frac 3q}_{q,1}}+\|b_2\|_{\dot B^{\frac3q}_{q,1}}(\|\delta u\|_{\dot B^{\frac3p}_{p,1}}+\|\delta J\|_{\dot B^{\frac3q-1}_{q,1}})\\
&\quad\quad\quad+(\|u_1 \|_{\dot B^{\frac 3p}_{p,1}}+\|(J_1, J_2) \|_{\dot B^{\frac 3q}_{q,1}})\|\delta J \|_{\dot B^{\frac 3q}_{q,1}}+(\|u_1 \|_{\dot B^{\frac 3p+1}_{p,1}}+\|(J_1, J_2) \|_{\dot B^{\frac 3q+1}_{q,1}})\|\delta J \|_{\dot B^{\frac 3q-1}_{q,1}}\\
&\quad\quad\quad+\|\delta u\|_{\dot B^{\frac 3p}_{p,1}}\|J_2 \|_{\dot B^{\frac 3q}_{q,1}}+\|\delta u\|_{\dot B^{\frac 3p+1}_{p,1}}\|J_2 \|_{\dot B^{\frac 3q-1}_{q,1}}
\Bigr)\mathrm{d}\tau,\\
&\lesssim \int_0^t\Omega(\tau)\bigl(\|\delta u (\tau)\|_{\dot B^{\frac3p-1}_{p,1}}+ \|(\delta b, \delta J)(\tau)\|_{\dot B^{\frac3q-1}_{q,1}}\bigr)\mathrm{d}\tau,
\end{split}
\end{equation*}
where 
\begin{align*}
&\Omega(\tau):=\Bigl(\|(u_1, u_2)(\tau)\|_{\dot B^{\frac3p-1}_{p,1}}+1\Bigr)\|(u_1, u_2)(\tau)\|_{\dot B^{\frac3p+1}_{p,1}}\\
&\hspace{3cm}+\Bigl(\|(b_1,b_2, J_1,J_2)(\tau)\|_{{\dot B^{\frac3q-1}_{q,1}}}+1\Bigr)\|(b_1,b_2, J_1,J_2)(\tau)\|_{{\dot B^{\frac3q+1}_{q,1}}}.
\end{align*}
It is clear that our assumptions ensure $\Omega\in L^1(\R_+),$ Gronwall {lemma}  then enables  us to conclude that $(\delta u, \delta b, \delta J)\equiv 0$ on $\R_+\times \R^3.$

For completing the proof of the   existence for the original Hall-MHD system, we  have to check that  $J_0=\nabla\times b_0$ implies $J=\nabla\times b$,
so that $(u, b)$ is indeed a distributional solution of \eqref{1.1}--\eqref{1.3}. 
Actually, it is easy to see that $\nabla\times b_L=J_L,$ thus we only need to check $\bar J=\nabla\times\bar b.$ Noticing that
\begin{align*}
&(\partial_t-\Delta)(\nabla\times \bar b-\bar J)=\nabla\times  Q_b(u-\eps J, {\rm{curl}}^{-1}(\nabla\times \bar b-\bar J)).
\end{align*}
Hence, using Lemma \ref{Le_27} and product law  \eqref{law11}, one gets for all $t\geq 0,$ 
\begin{equation*}
\begin{split}
&\|(\nabla\times \bar b-\bar J)(t)\|_{\dot B^{\frac3q-2}_{q,1}}
+\int_0^t\|\nabla\times \bar b-\bar J\|_{\dot B^{\frac3q}_{q,1}}\,d\tau\\
\leq &C\int_0^t\|u-\eps J \|_{\dot B^{\frac 3q}_{q,1}}\|\nabla\times \bar b- \bar J\|_{\dot B^{\frac 3q-1}_{q,1}} \mathrm{d}\tau.
\end{split}
\end{equation*}
Then  \eqref{562875} combined with interpolation inequality and Gronwall {lemma}  enables  that 
$\nabla\times \bar b- \bar J\equiv0$ on $\R_+\times\mathbb{R}^3.$

A slight modification on the proof could yields local well-posedness by assuming only  $\|\nabla\times b_0\|_{\dot B^{\frac 3q-1}_{q,1}}$  is small enough and  in addition
$$-\frac1{3}<\frac1q-\frac1q.$$
In fact, by Lemma \ref{563865} one can guarantee \eqref{111} and \eqref{112} are
satisfied with sufficient small time $T>0.$ And Lemma \ref{Le_27} implies that
$$\|J_L\|_{L^\infty_T(\dot{B}^{\frac3q-1}_{q, 1})}\lesssim\|\nabla\times b_0\|_{\dot{B}^{\frac3q-1}_{q, 1}}\lesssim \eps_0.$$
We  omit another details here and thus the proof of Theorem \ref{th} is completed.\quad$\square$


\section{The proof of Theorem \ref{th1}}
In this section, we devote to proving the decay estimates of the solution provided by Theorem \ref{th}. Started with the data $(u_0, b_0)$ satisfies \eqref{small},  we know that the solution $(u, b)\in E_p\times E_q$ such that $\nabla\times b\in E_q$ and satisfies \eqref{1.1200}. 

{For  any fixed $m\geq 1 ,$ let $T\geq0$ be the largest $t$ such that}
\begin{equation*}
W(t):=\sup_{0\leq \tau\leq t} \tau^{\frac{ m }{2}}\left( \|{D^m} u(\tau)\|_{\dot{B}_{p,1}^{\frac3p-1}}+\|{D^m} b(\tau)\|_{\dot{B}_{q,1}^{\frac3q-1}}\right)\leq C_0\eps_0,
\end{equation*}
where $C_0$ will be chosen later.
\subsection{Decay estimates for velocity fields}
Applying $\dot{\Delta}_j$ to the equation \eqref{1.1} and    taking  ${D^\alpha_x}$ ($|\alpha|=m$) on the resulting equation leads to
\begin{equation*}
\begin{split}
\partial_t \dot{\Delta}_j{D^\alpha_x}u-\mu\Delta\dot{\Delta}_j{D^\alpha_x} u=\dot{\Delta}_j{D^\alpha_x}\mathcal{P}\div(b\otimes b)-\dot{\Delta}_j{D^\alpha_x} \mathcal{P}\div(u\otimes u).
\end{split}
\end{equation*}
Then 
\begin{equation*}
\dot{\Delta}_j{D^\alpha_x}u=e^{t\mu\Delta}{\Delta}_j{D^\alpha_x}u_0+\int_0^t e^{(t-s)\mu\Delta}\mathcal{P}\left( \dot{\Delta}_j{D^\alpha_x} \div(b\otimes b)-\dot{\Delta}_j{D^\alpha_x} \div(u\otimes u) \right)\,ds.
\end{equation*}
Lemma \ref{semi}  thus implies that 
\begin{equation}\label{uu} 
\begin{split}
\| \dot{\Delta}_j{D^\alpha_x} u\|_{L^p}  \leq&   Ce^{- c\mu2^{2j}t} \| \dot{\Delta}_j{D^\alpha_x} u_0\|_{L^p}\\
&+C\int_0^t  e^{-c\mu2^{2j}(t-s)}   \left( \|\dot{\Delta}_j{D^\alpha_x} \mathcal{P}\div(b\otimes b)\|_{L^p}\right.\\
&\qquad\qquad\qquad\left.+\|\dot{\Delta}_j{D^\alpha_x} \mathcal{P}\div(u\otimes u)\|_{L^p} \right)\mathrm{d}s  \\
\leq& C   e^{-c\mu2^{2j}t} \| \dot{\Delta}_j{D^\alpha_x} u_0\|_{L^p}+A_1+A_2+A_3,
\end{split}
\end{equation} 
where
\begin{equation*} 
\begin{split}
A_1 := &C\int_0^{\frac t2}  e^{- c\mu2^{2j}(t-s)}  \left( \|\dot{\Delta}_j  {D^\alpha_x} \mathcal{P}\div(b\otimes b)\|_{L^p}+\|\dot{\Delta}_j {D^\alpha_x}  \mathcal{P}\div(u\otimes u)\|_{L^p} \right)\mathrm{d}s,\qquad
\end{split}
\end{equation*} 
\begin{equation*} 
\begin{split}
A_2:= &C\int_{\frac t2}^t   e^{- c\mu2^{2j}(t-s)} 2^{  j }     \|\dot{\Delta}_j D^{\alpha-1}_x \mathcal{P}\div(b\otimes b)\|_{L^p}\mathrm{d}s,
\end{split}
\end{equation*} 
\begin{equation*} 
\begin{split}
A_3:= &C\int_{\frac t2}^t   e^{-  c\mu2^{2j}(t-s)} 2^{  j }    \|\dot{\Delta}_jD^{\alpha-1}_x \div(u\otimes u)\|_{L^p} \mathrm{d}s.
\end{split}
\end{equation*} 
Noticing there exists a {constant} $\wt c>0$ such that
\begin{equation}\label{eexp}
e^{- c\mu2^{2j}t}2^{jk} \leq  e^{-\wt c\mu2^{2j}t} t^{-\frac{k}{2}}, \qquad \text{for~any}~ k\geq 0.
\end{equation}
{By} employing Proposition \ref{563856}, a straightforward calculation shows that
\begin{equation}\label{A1} 
\begin{split}
A_1 \leq &C t ^{-\frac{ m }{2}}  \int_0^{\frac t2}  e^{-\wt c\mu2^{2j}(t-s)}  \left( \|\dot{\Delta}_j  \div(b\otimes b)\|_{L^p}+\|\dot{\Delta}_j  \div(u\otimes u)\|_{L^p} \right)\mathrm{d}s\\
\leq&C g_j t ^{-\frac{ m }{2}}  2^{-\left(\frac3p-1\right)j}     \Bigl( \|   \div(b\otimes b)\|_{L^1(\dot{B}_{p, 1}^{\frac3p-1})}+\|  \div(u\otimes u)\|_{L^1(\dot{B}_{p, 1}^{\frac3p-1})}\Bigr) \\
\leq&C g_jt ^{-\frac{ m }{2}}  2^{-\left(\frac3p-1\right)j}  \left( \|  u \|_{E_p}^2+ \|  b \|_{ E_q}^2\right)\\
\leq&C g_jt ^{-\frac{ m }{2}}  2^{-\left(\frac3p-1\right)j} (1+\frac1\mu+\frac1\nu)\eps_0^2,
\end{split}
\end{equation}
{where} $\{g_j\}_{j\in\mathbb{Z}}\in \ell^1$ and $\|\{g_j\}\|_{\ell^1}\leq 1.$

\noindent Thanks to \eqref{law_000} , we have
\begin{equation*} \label{de.11}
\begin{split}
\|\dot{\Delta}_j{D^{\alpha-1}_x} \mathcal{P}\div(b\otimes b)\|_{L^p}\leq&C2^j \|\dot{\Delta}_j  (D^{\alpha-1}_xb\otimes b )\|_{L^p}\\
\leq&Cg_j2^{-\left( \frac3q-1\right)j} \|   D^{\alpha-1}_xb\otimes b \|_{\dot{B}_{p, 1}^{\frac3q } }\\
\leq&Cg_j2^{-\left( \frac3q-1\right)j} \|    D^{\alpha-1}_xb  \|_{\dot{B}_{q, 1}^{\frac6q-\frac3p } } \|  b \|_{\dot{B}_{q, 1}^{\frac3q} }\\
&\quad+Cg_j2^{-\left( \frac3q-1\right)j} \|    D^{\alpha-1}_xb  \|_{\dot{B}_{q, 1}^{\frac3q } } \|  b \|_{\dot{B}_{q, 1}^{\frac6q-\frac3p } }.
\end{split}
\end{equation*}
By means of interpolation , we get
\begin{equation*} \label{De.11}
\begin{split}
&\|    D^{\alpha-1}_x b  \|_{\dot{B}_{q, 1}^{\frac6q-\frac3p } } \|  b \|_{\dot{B}_{q, 1}^{\frac3q } }+\|    D^{\alpha-1}_x b  \|_{\dot{B}_{q, 1}^{\frac3q } } \|  b \|_{\dot{B}_{q, 1}^{\frac6q-\frac3p } }\\
\lesssim& \Bigl(\|    D^{\alpha}_x b  \|_{\dot{B}_{q, 1}^{\frac3q-1 } }^{1-r} \|  b \|_{\dot{B}_{q, 1}^{\frac3q-1 } } ^{r}\Bigr)\Bigl(\|    D^{\alpha}_x b  \|_{\dot{B}_{q, 1}^{\frac3q-1 } }^{\frac1m} \|  b \|_{\dot{B}_{q, 1}^{\frac3q-1 } } ^{1-\frac1m}\Bigr)\\
&\quad+ \Bigl(\|    D^{\alpha}_x b  \|_{\dot{B}_{q, 1}^{\frac3q-1 } }\Bigr) \Bigl(\|    D^{\alpha}_x b  \|_{\dot{B}_{q, 1}^{\frac3q-1 } }^{\frac1m-r} \|  b \|_{\dot{B}_{q, 1}^{\frac3q-1 } } ^{1-\frac1m+r}\Bigr)\\
\lesssim&  \|    D^{\alpha}_x b  \|_{\dot{B}_{q, 1}^{\frac3q-1 } }^{1+\frac1m-r} \|  b \|_{\dot{B}_{q, 1}^{\frac3q-1 } } ^{1-\frac1m+r},
\end{split}
\end{equation*}
where $r:=\frac3m({\frac1p-\frac1q})\leq\frac1m.$

\noindent Since $\frac3p-\frac3q<1,$  \eqref{eexp} and H\"older inequality {imply} that
\begin{equation}\label{A2} 
\begin{split}
A_2\leq&  Cg_j2^{-\left( \frac3p-1\right)j} \int_{\frac{t}2}^t (t-s)^{-\frac12(1+\frac3p-\frac3q)} \| D^{\alpha}_x b  \|_{\dot{B}_{q, 1}^{\frac3q-1 } }^{1+\frac1m-r} \|  b \|_{\dot{B}_{q, 1}^{\frac3q-1 } } ^{1-\frac1m+r}\,ds\, \\
\leq& C g_j 2^{-\left( \frac3p-1\right)j} (\frac{t}2)^{-\frac{m}2(1+\frac1m-r)}W^{1+m-r}\eps_0^{1-\frac1m+r} \int_{\frac{t}2}^t (t-s)^{-\frac12(1+\frac3p-\frac3q)} \,ds\\
\leq& 2^{\frac{m}2+1}C g_j 2^{-\left( \frac3p-1\right)j} t^{-\frac{m}2}W^{1+\frac1m-r}\eps_0^{1-\frac1m+r}.
\end{split}
\end{equation} 
Thanks to \eqref{law00}, we have
\begin{equation*} \label{de.22}
\begin{split}
\|\dot{\Delta}_j{D^{\alpha-1}_x} \mathcal{P}\div(u\otimes u)\|_{L^p} 
\leq&C2^j \|\dot{\Delta}_j  (D^{\alpha-1}_xu\otimes u )\|_{L^\infty(L^p)}\\
\leq&Cg_j2^{-\left( \frac3p-1\right)j}  \|    D^{\alpha-1}_xu\otimes u  \|_{\dot{B}_{p, 1}^{\frac3p } }\\
\leq&Cg_j2^{-\left( \frac3p-1\right)j}  \|    D^{\alpha-1}_xu  \|_{\dot{B}_{p, 1}^{\frac3p } } \|  u \|_{\dot{B}_{p, 1}^{\frac3p } }.
\end{split}
\end{equation*} 
By means of interpolation,
\begin{equation*} \label{De.22}
\begin{split}
&\|    D^{\alpha-1}_x u  \|_{\dot{B}_{p, 1}^{\frac3p } } \|  u \|_{\dot{B}_{p, 1}^{\frac3p } }\\
\lesssim& \|    D^{\alpha}_x u  \|_{\dot{B}_{p, 1}^{\frac3p-1 } }\|    D^{\alpha}_x u  \|_{\dot{B}_{p, 1}^{\frac3p-1 } }^{\frac1m} \|  u \|_{\dot{B}_{p, 1}^{\frac3p-1 } } ^{1-\frac1m}.
\end{split}
\end{equation*}
\noindent Thus, \eqref{eexp} and  H\"older inequality  {imply} that
\begin{equation}\label{A3} 
\begin{split}
A_3\leq&  Cg_j2^{-\left( \frac3p-1\right)j}\, \int_{\frac{t}2}^t (t-s)^{-\frac12}\|    D^{\alpha}_x u  \|_{\dot{B}_{p, 1}^{\frac3p-1 } }^{1+\frac1m} \|  u \|_{\dot{B}_{p, 1}^{\frac3p-1 } } ^{1-\frac1m}\,ds \\
\leq& C g_j 2^{-\left( \frac3p-1\right)j} (\frac{t}2)^{-\frac{m+1}2}W^{1+\frac1m}\eps_0^{1-\frac1m}\int_{\frac{t}2}^t (t-s)^{-\frac12}\,ds.\\
\leq& 2^{\frac{m}2+1}C g_j 2^{-\left( \frac3p-1\right)j} t^{-\frac{m}2}W^{1+\frac1m}\eps_0^{1-\frac1m}.
\end{split}
\end{equation} 
Putting \eqref{A1}, \eqref{A2} and \eqref{A3} together, one has
\begin{equation} \label{Du}
\begin{split}
 t^{\frac{m}2}  \|D^m u\|_{\dot{B}^{\frac3p-1}_{p, 1}}\lesssim \varepsilon_0+ (1+\frac1\mu+\frac1\nu){\eps_0}^2+W^{1+\frac1m-r}{\eps_0}^{1-\frac1m+r}+W^{1+\frac1m}{\eps_0}^{1-\frac1m}.
\end{split}
\end{equation}
\subsection{Decay estimates for Magnetic fields}
Applying $\dot{\Delta}_j$ to equation \eqref{1.3}  and    taking  ${D^\alpha}$ on the resulting equation leads to
\begin{equation}\label{598670}
\begin{split}
\partial_t \dot{\Delta}_j {D^{ \alpha}} b-\nu\Delta\dot{\Delta}_j {D^\alpha} b=&\dot{\Delta}_j {D^\alpha} \nabla\times((u-  \eps J)\times b).
\end{split}
\end{equation}
Then 
\begin{equation*}
\dot{\Delta}_j{D^\alpha_x}b=e^{t\nu\Delta}{\Delta}_j{D^\alpha_x}b_0+\int_0^t e^{(t-s)\nu\Delta}\left( \dot{\Delta}_j{D^\alpha_x} \nabla\times(u\times b)-\eps\dot{\Delta}_j{D^\alpha_x} \nabla\times(J\times b) \right)\,ds.
\end{equation*}
Lemma \ref{semi}  thus implies that 
\begin{equation}\label{bb} 
\begin{split}
\| \dot{\Delta}_j{D^\alpha_x} b\|_{L^q}  \leq&   Ce^{- c\nu2^{2j}t} \| \dot{\Delta}_j{D^\alpha_x} b_0\|_{L^q}\\
&+C\int_0^t  e^{-c\nu2^{2j}(t-s)}   \left( \|\dot{\Delta}_j{D^\alpha_x} \nabla\times(u\times b)\|_{L^p}\right.\\
&\left.\qquad\qquad\qquad-\|\dot{\Delta}_j{D^\alpha_x} \nabla\times(\eps J\times b)\|_{L^q} \right)\mathrm{d}s  \\
\leq& C   e^{-c\nu2^{2j}t} \| \dot{\Delta}_j{D^\alpha_x} b_0\|_{L^q}+A_4+A_5+A_6,
\end{split}
\end{equation} 
where
\begin{equation*} 
\begin{split}
A_4 := &C\int_0^{\frac t2}  e^{- c\nu2^{2j}(t-s)}  \left( \|\dot{\Delta}_j  {D^\alpha_x} \nabla\times(u\times b)\|_{L^q}+\|\dot{\Delta}_j {D^\alpha_x}  \nabla\times(\eps J\times b)\|_{L^q} \right)\mathrm{d}s,\qquad
\end{split}
\end{equation*} 
\begin{equation*} 
\begin{split}
A_5:= &C\int_{\frac t2}^t   e^{- c\nu2^{2j}(t-s)} 2^{  j }     \|\dot{\Delta}_j D^{\alpha-1}_x \nabla\times(u\times b)\|_{L^q}\mathrm{d}s,
\end{split}
\end{equation*} 
\begin{equation*} 
\begin{split}
A_6:= &C\int_{\frac t2}^t   e^{-  c\nu2^{2j}(t-s)} 2^{  j }   \|\dot{\Delta}_jD^{\alpha-1}_x \nabla\times( \eps J\times b)\|_{L^q} \mathrm{d}s.
\end{split}
\end{equation*} 
Similar with the method of getting estimates  \eqref{A1} and \eqref{A3}, one can easily  show that
\begin{equation}\label{A4} 
\begin{split}
A_4 \leq &C t ^{-\frac{ m }{2}}  \int_0^{\frac t2}  e^{-\wt c\mu2^{2j}(t-s)}  \left( \|\dot{\Delta}_j \nabla\times(u\times b)\|_{L^q}+\|\dot{\Delta}_j  \nabla\times(\eps J\times b)\|_{L^q} \right)\mathrm{d}s\\
\leq&C g_j t ^{-\frac{ m }{2}}  2^{-\left(\frac3q-1\right)j}     \Bigl( \|   \nabla\times(u\times b)\|_{L^1(\dot{B}_{q, 1}^{\frac3q-1})}+\|  \nabla\times(\eps J\times b))\|_{L^1(\dot{B}_{q, 1}^{\frac3q-1})}\Bigr) \\
\leq&C g_jt ^{-\frac{ m }{2}}  2^{-\left(\frac3q-1\right)j}  \left( \|  u \|_{E_p}^2+ \|  (b , J)\|_{ E_q }^2\right)\\
\leq&C g_jt ^{-\frac{ m }{2}}  2^{-\left(\frac3q-1\right)j} (1+\frac1\mu+\frac1\nu)\eps_0^2,
\end{split}
\end{equation}
and
\begin{equation}\label{A5} 
\begin{split}
A_5\leq&  Cg_j2^{-\left( \frac3q-1\right)j} \int_{\frac{t}2}^t (t-s)^{-\frac12}\|D^{\alpha-1}_x (u\times b)  \|_{\dot{B}_{q, 1}^{\frac3q } }\,ds\\
\leq&  Cg_j2^{-\left( \frac3q-1\right)j} \int_{\frac{t}2}^t (t-s)^{-\frac12}\Bigl(\|D^{\alpha-1}_x u\times b  \|_{\dot{B}_{q, 1}^{\frac3q } }+\|u\times D^{\alpha-1}_x b  \|_{\dot{B}_{q, 1}^{\frac3q } }\Bigr)\,ds\\
\leq&  Cg_j2^{-\left( \frac3q-1\right)j}  W^{1+\frac1m}\eps_0^{1-\frac1m}\int_{\frac{t}2}^t (t-s)^{-\frac12}s^{-\frac12(1+m)}\,ds\\
\leq&  2^{\frac{m}2+1}Cg_j2^{-\left( \frac3q-1\right)j} t^{-\frac{m}2}W^{1+\frac1m}\eps_0^{1-\frac1m}.
\end{split}
\end{equation} 
\noindent Because $\div b=0,$ one can rewrite
$$\nabla\times(\eps J\times b)=\eps\nabla\times\bigl(\div(b\otimes b)-\nabla(\frac{|b|^2}{2})\bigr)=\eps\nabla\times\bigl(\div(b\otimes b)\bigr),$$
then, H\"older inequality yields
\begin{equation}\label{A6}
\begin{split}
A_6\leq& C\eps\int_{\frac t2}^t   e^{-  c\nu2^{2j}(t-s)} 2^3{  j }   \|\dot{\Delta}_jD^{\alpha-1}_x b\otimes b\|_{L^q} \mathrm{d}s\\
\leq& C\eps2^{-(\frac3q-1)j}g_j\int_{\frac t2}^t   e^{-  c\nu2^{2j}(t-s)} 2^{ 2 j }   \|D^{\alpha-1}_x (b\otimes b)\|_{\dot{B}_{q, 1}^{\frac3q }} \mathrm{d}s\\
\leq& C\eps2^{-(\frac3q-1)j}g_j\int_{\frac t2}^t   e^{-  c\nu2^{2j}(t-s)} 2^{ 2 j }   \|D^{\alpha}_x b\|_{\dot{B}_{q, 1}^{\frac3q-1 }}\| \nabla\times b\|_{\dot{B}_{q, 1}^{\frac3q-1 }} \mathrm{d}s\\
\leq& C\eps\eps_02^{-(\frac3q-1)j}g_j(\frac{t}2)^{-\frac{m}2}\int_{\frac t2}^t   e^{-  c\nu2^{2j}(t-s)} 2^{ 2 j }   s^\frac{m}{2}\|D^{\alpha}_x b\|_{\dot{B}_{q, 1}^{\frac3q-1 }}\mathrm{d}s\\
\leq& 2^{\frac{m}{2}+1}C\frac\eps\nu2^{-(\frac3q-1)j}g_jt^{-\frac{m}2}W(T)\eps_0,
\end{split}
\end{equation}
here we use {the fact that}
$$\int_{\frac t2}^t   e^{-  c\nu2^{2j}(t-s)} 2^{ 2 j } \,ds\leq\frac{2}{c\nu}.$$

Putting \eqref{A4}, \eqref{A5} and \eqref{A6} together, one has
\begin{equation*} \label{Db}
\begin{split}
 t^{\frac{m}2}  \|D^m b\|_{\dot{B}^{\frac3q-1}_{q, 1}}\lesssim\varepsilon_0+ (1+\frac1\mu+\frac1\nu){\eps_0}^2+W^{1+\frac1m}{\eps_0}^{1-\frac1m}+{\frac\eps\nu}W\eps_0.
\end{split}
\end{equation*}
This combine with \eqref{Du} implies that
\begin{equation*} 
\begin{split}
W(T) 
&\leq C\varepsilon_0+C\left( 1+\frac1\mu+\frac1\nu+\frac\eps\nu C_0+C_0^{{1+\frac1m-r}}+C_0^{{1+\frac1m}}\right)\eps_0^2.
\end{split}
\end{equation*}
Thanks to \eqref{Tsmall}, one can take suitable $C_0$ such that  $W(T) <\frac12C_0\eps_0.$ By the continuous induction,  we have $W(t)\leq C_0\eps_0$ for all $t\geq 0.$ It completes the proof of Theorem \ref{th1}.\quad$\square$


\section{The proof of Theorem \ref{th2}}\label{se5}
In order to prove the theorem ( $\mu=\nu$ is assumed), we need to notice from \cite{2019arXiv191103246D}  that if  $(u, b)$ is a  solution of Hall-MHD system \eqref{1.1}-\eqref{1.3} in the sense of distribution, then the so-called \emph{velocity of electron $v:=u-\eps J$} satisfies:
\begin{equation}\label{ve}
\begin{aligned}
&\partial_tv-\mu\Delta v=\mathcal{P}\bigl(\div(b\otimes b)-\div (u\otimes u)\bigr)-\eps\nabla\times((\nabla\times v)\times b)\\
&\hspace{4cm}+\nabla\times(v\times u)+2\eps\nabla\times(v\cdot\nabla b).
\end{aligned}
\end{equation}
The equation \eqref{ve} is still quasi-linear compare to the equation of current $J$. However, owing to \begin{equation*}((\nabla\times(\nabla\times v))\times b, v)_{L^2}=((\nabla\times v)\times b, \nabla\times v)_{L^2}=0,\end{equation*}
the most nonlinear term cancels out when performing an energy method. Thus, contrasting to the uniqueness part of Theorem \ref{th}, it will help us to release the smallness assumption on current $J$.

We now focusing to the proof of Theorem \ref{th2}. Since  $(u_{0,2}, b_{0,2})$ satisfies the initial conditions in  \cite{2019arXiv191103246D} Theorem 2.2 about the local well-posedness of Hall-MHD.  Thus, supplemented with initial data $(u_{0,2}, b_{0,2})$ there exists a solution $(u_2, b_2)$ on the maximal time  interval $[0,T^*)$
fulfilling 
$$(u_2, b_2, \nabla\times b_2)\in E_2(t),$$
for all $t<T^*.$

Define $v_i:=u_i-\eps J_i,~(i=1,2).$ It is then convenient to consider the difference $(\wt u, \wt b, \wt v):=(u_1-u_2, b_1-b_2, v_1-v_2),$ which satisfies:

\begin{equation}\label{d}
\left\{\begin{aligned}
 &\partial_t {\wt u}-\mu\Delta {\wt u}:=d_1,\\
 &\partial_t {\wt b}-\mu\Delta {\wt b}:=d_2,\\
 &\partial_t \wt v-\mu\Delta\wt v:=d_1+d_3+d_4+ d_5,\\
 &(\wt u, \wt B, \wt v)|_{t=0}{=}(u_{0,1}-u_{0, 2}, b_{0,1}-b_{0, 2},v_{0,1}-v_{0, 2}),
\end{aligned}
\right.
\end{equation}
where
\begin{align*}
&d_1:=\mathcal{P}\bigl(-\div(\wt b\otimes \wt b)+\div(\wt b\otimes b_1)+\div(b_1\otimes \wt b)+\div(\wt u\otimes \wt u)-\div(\wt u\otimes u_1)\\&\quad\quad\quad-\div(u_1\otimes \wt u)\bigr),\\
&d_2:=\nabla\times(-\wt v\times\wt b+v_1\times\wt b+\wt v\times b_1),\\
&d_3:=-\eps\nabla\times(-(\nabla\times \wt v)\times\wt b+(\nabla\times v_1)\times\wt b+(\nabla\times \wt v)\times b_1),\\
&d_4:=\eps\nabla\times(-\wt v\times\wt u+ v_1\times\wt u+\wt v\times u_1),\\
&d_5:=2\eps\nabla\times(-\wt v\cdot\nabla\wt b+v_1\cdot\nabla\wt b+\wt v\cdot\nabla b_1).
\end{align*}
We know that $(\wt u, \wt b, \wt v)\in  L^\infty_t(\dot{B}^\frac12_{2, 1})\cap L^1_t(\dot{B}^\frac52_{2, 1})$ since both $(u_i, b_i, v_i)$ belong to that space. Now, we shall estimate the difference $(\wt u, \wt b, \wt v)$ in the space $\dot{B}^\frac12_{2, 1},$ one thus has to verify $d_1$ to $d_5$ live in the space  $L^1_T(\dot{B}^\frac12_{2, 1})$ firstly, which is quite easy.  A standard energy method gives that for all $t\in[0, T^*),$
\begin{multline}
\|(\wt u, \wt B, \wt v)(t)\|_{\dot{B}^{\frac12}_{2, 1}}+\mu\int_0^t\|(\wt u, \wt B, \wt v)\|_{\dot{B}^{\frac52}_{2, 1}}\,d\tau\lesssim\int_0^t\Bigl(\|(d_1, d_2, d_4, d_5)\|_{\dot{B}^{\frac12}_{2, 1}}\\
+\|\nabla\times((\nabla\times v_1)\times\wt b)\|_{\dot{B}^{\frac12}_{2, 1}}+\sum_{j\in\mathbb{Z}}2^{\frac{3j}{2}}\|[\ddj, (b_1+\wt b)\times](\nabla\times\wt v)\|_{L^2}
\,\Bigr)d\tau\label{5.555}.
\end{multline}
Using the fact that $\dot{B}^\frac12_{2, 1}$ is an algebra and 
$$\|b\|_{\dot{B}^\frac32_{2,1}}\sim\|(u, v)\|_{\dot{B}^\frac12_{2,1}}, \quad\|\nabla b\|_{\dot{B}^\frac32_{2,1}}\sim\|(u, v)\|_{\dot{B}^\frac32_{2,1}}.$$
one has 
$$\begin{aligned}
\|d_1\|_{\dot{B}^{\frac12}_{2, 1}}&\lesssim\|\wt b\|_{\dot{B}^{\frac32}_{2, 1}}^2+\|\wt b\|_{\dot{B}^{\frac32}_{2, 1}}\|b_1\|_{\dot{B}^{\frac32}_{2, 1}}+\|\wt u\|_{\dot{B}^{\frac32}_{2, 1}}^2+\|\wt u\|_{\dot{B}^{\frac32}_{2, 1}}\|u_1\|_{\dot{B}^{\frac32}_{2, 1}},\\
\|d_2\|_{\dot{B}^{\frac12}_{2, 1}}&\lesssim\|\wt v\|_{\dot{B}^{\frac32}_{2, 1}}\|\wt b\|_{\dot{B}^{\frac32}_{2, 1}}+\|v_1\|_{\dot{B}^{\frac32}_{2, 1}}\|\wt b\|_{\dot{B}^{\frac32}_{2, 1}}+\|\wt v\|_{\dot{B}^{\frac32}_{2, 1}}\|b_1\|_{\dot{B}^{\frac32}_{2, 1}},\\
\|d_4\|_{\dot{B}^{\frac12}_{2, 1}}&\lesssim\|\wt v\|_{\dot{B}^{\frac32}_{2, 1}}\|\wt u\|_{\dot{B}^{\frac32}_{2, 1}}+\|v_1\|_{\dot{B}^{\frac32}_{2, 1}}\|\wt u\|_{\dot{B}^{\frac32}_{2, 1}}+\|\wt v\|_{\dot{B}^{\frac32}_{2, 1}}\|u_1\|_{\dot{B}^{\frac32}_{2, 1}},\\
\|d_5\|_{\dot{B}^{\frac12}_{2, 1}}
&\lesssim \|\wt v\|_{\dot{B}^{\frac32}_{2, 1}}\|\nabla\wt b\|_{\dot{B}^{\frac32}_{2, 1}}+\|\wt v\|_{\dot{B}^{\frac32}_{2, 1}}\|\nabla b_1\|_{\dot{B}^{\frac32}_{2, 1}}+\|v_1\|_{\dot{B}^{\frac32}_{2, 1}}\|\nabla\wt b\|_{\dot{B}^{\frac32}_{2, 1}}\\
&\lesssim \|\wt v\|_{\dot{B}^{\frac32}_{2, 1}}\|(\wt u, \wt v)\|_{\dot{B}^{\frac32}_{2, 1}}+\|(\wt u, \wt v)\|_{\dot{B}^{\frac32}_{2, 1}}\|(u_1, v_1)\|_{\dot{B}^{\frac32}_{2, 1}},
\end{aligned}$$
and
$$\|\nabla\times((\nabla\times v_1)\times\wt b)\|_{\dot{B}^{\frac12}_{2, 1}}\lesssim \|v_1\|_{\dot{B}^{\frac52}_{2, 1}}\|\wt b\|_{\dot{B}^{\frac12}_{2, 1}}.$$
Thanks to the commutator estimate (see \cite{MR3186849})
\begin{equation*}
\sum_{j\in\Z} 2^{\frac{3j}2}\|[\dot\Delta_j, w]z\|_{L^2}
\lesssim \|\nabla w\|_{\dot B^{\frac32}_{2,1}}\|z\|_{\dot B^{\frac12}_{2,1}},
\end{equation*}
we have
$$\sum_{j\in\mathbb{Z}}2^{\frac{3j}{2}}\|[\ddj, (b_1+\wt b)\times](\nabla\times\wt v)\|_{L^2}\lesssim \|(\wt u,\wt v, u_1, v_1)\|_{\dot B^{\frac32}_{2,1}}\|\wt v\|_{\dot B^{\frac32}_{2,1}}.$$

 \noindent Hence, by interpolation and Young's inequality, inequality \eqref{5.555} becomes
\begin{align}
    &\|(\wt u, \wt b, \wt v)(t)\|_{\dot{B}^{\frac12}_{2, 1}}+\mu\int_0^t\|(\wt u, \wt b, \wt v)(\tau)\|_{\dot{B}^{\frac52}_{2, 1}}\,d\tau\notag\\
\leq&\|(\wt u, \wt b, \wt v)(0)\|_{\dot{B}^{\frac12}_{2, 1}}+\int_0^t \bigl(\wt\Omega(\tau)+C\|(\wt u, \wt b,\wt v)\|_{\dot{B}^{\frac52}_{2, 1}}\bigr)\|(\wt u, \wt b, \wt v)(\tau)\|_{\dot{B}^{\frac12}_{2, 1}}\,d\tau\label{dd}
\end{align}
with $\wt\Omega(t):=C\bigl(\|(u_1, b_1, v_1)\|_{\dot{B}^{\frac32}_{2, 1}}^2+\|v_1\|_{\dot{B}^{\frac52}_{2, 1}}\bigr)\cdotp$
\medbreak
Now, one needs to prove the following bootstrap argument (see similar result in \cite{2019arXiv191209194D}).

\begin{lemma}\label{Le_5.000}
Let $X,$ $D,$ $W$ be three   nonnegative  measurable functions on $[0,T]$. Assume that there exists a  nonnegative real constant ~$C$ 
such that  for any $t\in[0,T]$, 
\begin{equation}\label{5.000}
X(t)+\mu\int_0^t D(\tau)\,d\tau\leq X(0)+\int_0^t\Bigl(\wt\Omega(\tau) X(\tau)+ CX(\tau)D(\tau)\Bigr)\,d\tau.
\end{equation}
If, in addition, 
\begin{equation}
2CX(0)\exp\biggl(\int_0^T \wt\Omega(\tau) \,d\tau\biggr) < \mu\label{5.200},
\end{equation}
then, for any $t\in[0,T]$, one has
\begin{equation}
X(t)+\frac\mu2\int_0^t D\,d\tau\leq X(0)\exp\biggl(\int_0^t \wt\Omega\,d\tau\biggr)\cdotp\label{5.300}
\end{equation}
\end{lemma}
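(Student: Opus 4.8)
The plan is to run a continuation (bootstrap) argument. The only obstruction to a plain linear Gronwall estimate in \eqref{5.000} is the quadratic term $CX(\tau)D(\tau)$, and the key idea is that it can be absorbed into the dissipative term $\mu\int_0^t D$ as soon as $X$ stays below the threshold $\mu/(2C)$. Accordingly, I set $A:=\int_0^T\wt\Omega(\tau)\,d\tau$ and define
\[
T_0:=\sup\Bigl\{t\in[0,T]:\ 2CX(\tau)\le\mu\ \text{for all}\ \tau\in[0,t]\Bigr\}.
\]
Since $e^{A}\ge1$, the smallness condition \eqref{5.200} forces $2CX(0)<\mu$, so $T_0>0$; because $X$ is continuous (in the application it is the $\dot B^{\frac12}_{2,1}$–norm of a time–continuous Besov-valued function), the defining set is closed and $2CX\le\mu$ holds on all of $[0,T_0]$.

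On $[0,T_0]$ the bound $CX(\tau)\le\mu/2$ gives $\int_0^tCX D\le\frac\mu2\int_0^tD$, and inserting this into \eqref{5.000} and absorbing the quadratic term yields, for every $t\in[0,T_0]$,
\[
X(t)+\frac\mu2\int_0^t D(\tau)\,d\tau\le X(0)+\int_0^t\wt\Omega(\tau)X(\tau)\,d\tau .
\]
Discarding the nonnegative dissipation term and applying the integral form of Gronwall's lemma to $X(t)\le X(0)+\int_0^t\wt\Omega X$ gives $X(t)\le X(0)\exp\bigl(\int_0^t\wt\Omega\bigr)$; reinserting this bound on the right-hand side above (using $\int_0^t\wt\Omega X\le X(0)(e^{\int_0^t\wt\Omega}-1)$) reproduces exactly the claimed estimate \eqref{5.300} on $[0,T_0]$. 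The crucial self-improvement is then immediate: from $X(t)\le X(0)e^{\int_0^t\wt\Omega}\le X(0)e^{A}$ together with \eqref{5.200} we obtain $2CX(t)\le 2CX(0)e^{A}<\mu$ for every $t\in[0,T_0]$, a \emph{strict} inequality. If $T_0<T$, continuity of $X$ propagates $2CX<\mu$ to a slightly larger interval, contradicting the maximality of $T_0$; hence $T_0=T$ and \eqref{5.300} holds on all of $[0,T]$.

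The main obstacle is precisely the coupling between $X$ and $D$ in the term $CXD$: one cannot apply Gronwall directly, since $\int_0^tCXD$ is not controlled a priori, so \eqref{5.000} is only an implicit estimate. The whole point of the argument is that the smallness assumption \eqref{5.200}, with its built-in factor $2$, is exactly what makes the absorption self-consistent and turns the implicit bound into the explicit one \eqref{5.300}; the only mildly delicate point is the closed/open step closing the continuation, for which I rely on the continuity of $X$ so that the set $\{t:\ 2CX\le\mu\ \text{on}\ [0,t]\}$ is closed and the strict inequality obtained above can be continued past $T_0$.
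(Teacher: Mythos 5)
Your proof is correct and follows essentially the same route as the paper: the same bootstrap set $\{t:\,2CX\le\mu\}$, the same absorption of $CXD$ into $\frac\mu2\int D$, Gronwall on the resulting linear inequality, and the strict inequality from \eqref{5.200} plus continuity to close the continuation argument. Your treatment of the Gronwall step (discarding the dissipation, then reinserting) is slightly more explicit than the paper's, and you rightly note that continuity of $X$ is being used even though the lemma only states measurability — an implicit assumption the paper makes as well.
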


\begin{proof}    Let $\wt T$ be the largest $t\leq T$ such that 
\begin{equation}\label{5.999} 2C\sup_{0\leq t'\leq t} X(t')\leq\mu.\end{equation}
Then, \eqref{5.000} implies that for all $t\in[0,\wt T],$ we have\begin{equation}
X(t)+\frac\mu2 \int_0^t D\,d\tau\leq X(0)+\int_0^t\wt\Omega(\tau) X(\tau)\,d\tau .\label{5.400}
\end{equation}
By Gronwall  lemma, this  yields for all $t\in[0,\wt T],$ 
$$X(t)+\frac\mu2\int_0^t D(\tau)\,d\tau\leq X(0)\exp\biggl(\int_0^t\wt\Omega(\tau)\,d\tau\biggr)\cdotp$$
Hence,  it is clear that if \eqref{5.200} is satisfied, then \eqref{5.999} is satisfied with 
a strict inequality. A continuity argument thus ensures that we must have $\wt T=T$
and thus \eqref{5.300} on $[0,T].$ 
\end{proof}
 Noticing our assumptions on $(u_1, b_1)$ ensure that $\wt\Omega \in L^1(\R_+).$ By virtue of \eqref{ind}, let $\eta$ satisfies
$$2C\eta\exp\bigl(\|\wt\Omega\|_{L^1(\R_+)}\bigr)<\mu,$$
 and apply Lemma \ref{Le_5.000} to
inequality \eqref{dd}, we have
for any $t\in[0, T^*),$
\begin{align*}
    \|(\wt u, \wt B, \wt v)(t)\|_{\dot{B}^{\frac12}_{2, 1}}+\frac\mu2\int_0^t\|(\wt u, \wt B, \wt v)(\tau)\|_{\dot{B}^{\frac52}_{2, 1}}\,d\tau\leq \eta\exp\bigl(\|\wt\Omega\|_{L^1(\R_+)}\bigr).
\end{align*}
\medbreak
The above inequality ensures that $(\wt u, \wt b, \wt v)\in L^\infty(0, T^*; \dot{B}^\frac12_{2,1})\cap L^1(0, T^*; \dot{B}^\frac52_{2,1})$ and so does $(u_2, b_2, v_2),$ thus we conclude by classic arguments that $(u_2, b_2, v_2)$  can continued beyond $T^*,$ which finally implies that $T^*=\infty.$
This completes the proof of Theorem \ref{th2}.\quad$\square$

\medskip
\textbf{Acknowledgment.} { Part of this paper was discussed when the first author visit  Universit\'e Paris-Est. The authors express
 	much gratitude to Prof. Rapha\"{e}l Danchin and Prof. Weixi Li  for their supports. The first author thank to Wuhan university's financial supports to visit Universit\'e Paris-Est. The second author is supported by the PhD fellowship from  Universit\'e Paris-Est.}

\end{document}